	\def\crt{^{\scriptscriptstyle {\it CRT}}}
	\def\scrt{_{\scriptscriptstyle {\it CRT}}}
	\def\ct{{\it CRT}}
	\newcommand{\ve}{\varepsilon}
	\newcommand{\Hil}{\mathbb H}
	\newcommand{\F}{\mathbb F}
	\def\sur{_\R}
	\def\suc{_\C}
	\def\so{_{\scriptscriptstyle O}}
	\def\su{_{\scriptscriptstyle U}}
	\def\st{_{\scriptscriptstyle T}}
	\newcommand{\id}{\ensuremath{\operatorname{id}}}
	\newcommand{\ftn}[3]{ #1 : #2 \rightarrow #3 }
	\newcommand{\mc}[1]{\mathcal{#1}}
	\newcommand{\sm}[4]{       \left( \begin{smallmatrix} 
					{#1} & {#2} \\ {#3} & {#4}
                   		    \end{smallmatrix} \right)         }
	\newcommand{\catr}{\textbf{C*$\R$-Alg}}
	\newcommand{\catabgp}{\textbf{Ab}}
	\newcommand{\catkk}{\textbf{KK}}
	\newcommand{\multialg}[1]{\mathcal{M}(#1)}
	\newcommand{\norm}[1]{\left\| #1 \right\|}
	\newcommand{\im}{\operatorname{Im}}
	\newcommand{\asy}[1]{\left\langle #1 \right\rangle}
	\newcommand{\Z}{\mathbb{Z}}
	\newcommand{\Ham}{\mathbb{H}}
	\newcommand{\C}{\mathbb{C}}
	\newcommand{\R}{\mathbb{R}}
	\newcommand{\N}{\mathbb{N}}
	\newcommand{\K}{\mathcal{K}}
	\newcommand{\kk}{\operatorname{KK}}
	\newcommand{\ext}{\operatorname{Ext}}
	\newcommand{\hoom}{\operatorname{Hom}}
	\theoremstyle{plain}
	\newtheorem{thm}{Theorem}[section]
	\newtheorem{lemma}[thm]{Lemma}
	\newtheorem{theor}[thm]{Theorem}
	\newtheorem{propo}[thm]{Proposition}
	\newtheorem{corol}[thm]{Corollary}
	\theoremstyle{definition}
	\newtheorem{defin}[thm]{Definition}
	\newtheorem{probl}[thm]{Problem}
	\numberwithin{equation}{section}
	\numberwithin{figure}{section}
\begin{document}

	\title[Pictures of $\kk$-theory for real $C^{*}$-algebras ]{Pictures of $\kk$-theory for real $C^{*}$-algebras  \\
		and almost commuting matrices}

	\author{Jeffrey L. Boersema}	
	\address{Department of Mathematics \\
	Seattle University \\
	Seattle, WA 98133 USA}
	\email{boersema@seattle.edu}

	\author{Terry A. Loring}
	\address{University of New Mexico \\ Department of Mathematics and Statistics \\
	 Albuquerque, New Mexico\\
	  87131, USA}
	  \email{loring@math.unm.edu}
	  
	\author{Efren Ruiz}
        \address{Department of Mathematics\\University of Hawaii,
Hilo\\200 W. Kawili St.\\
Hilo, Hawaii\\
96720-4091 USA}
        \email{ruize@hawaii.edu}
        \date{\today}

	\keywords{$C^*$-algebras, Real $C^*$-algebras, $\kk$-theory, almost commuting matrices }
	\subjclass[2010]{Primary: 46L05, 46L80, 46L87}
	\begin{abstract}
We give a systematic account of the various pictures of $\kk$-theory for real $C^*$-algebras, proving natural isomorphisms between the groups that arise from each picture.  As part of this project, we develop the universal properties of $\kk$-theory, and we use \ct-structures to prove that a natural transformation $F(A) \rightarrow G(A)$ between homotopy equivalent, stable, half-exact functors defined on real $C^*$-algebras is an isomorphism provided it is an isomorphism on the smaller class of $C^*$-algebras. Finally, we develop $E$-theory for real $C^*$-algebras and use that to obtain new negative results regarding the problem of approximating almost commuting real matrices by exactly commuting real matrices.
	\end{abstract}

        \maketitle

\section{Introduction and Preliminaries}

A real $C^*$-algebra is a Banach $*$-algebra $A$ over the real numbers such that $\| x^* x \| = \|x\|^2$ holds for all $x$ and such that every element of the form $1 + x^* x$ is invertible in the unitization of $A$ (see \cite{schroderbook}). In this paper, we will adopt the term $R^*$-{\it algebra} instead. As is well-known, every $R^*$-algebra is isometrically isomorphic to a closed $*$-algebra of bounded operators on a Hilbert space over $\R$. In addition, every $R^*$-algebra is isomorphic to the $*$-algebra of fixed elements of a $C^*$-algebra with a conjugate linear involution.

In Kasparov's seminal paper \cite{kasparov80} introducing $\kk$-theory, he simultaneously considered both $R^*$-algebras and $C^*$-algebras. Since then, many alternate but equivalent or closely related pictures of $\kk$-theory have been introduced and developed by various authors 
(\cite{ConnesHigson90}, \cite{cuntz87}, \cite{higson87}, \cite{higson90}, \cite{ManuilovThomsen04}, \cite{thomsen90}). The ability to move among the various pictures has contributed immensely to the utility of $\kk$-theory as a tool for solving problems. However, these authors have not consistently followed Kasparov's lead in considering the real case along with the complex.

In recent years, substantial progress has been made in developing the tools to study $R^{*}$-algebras including the development of united $K$-theory and the universal coefficient theorem in \cite{boersema02} and \cite{boersema04}. This has led to a classification of purely infinite simple $R^*$-algebras (in \cite{BRS}) and the classification of real forms of UHF-algebras that are stable over the CAR-algebra (in \cite{stacey}).

Given the centrality of $\kk$-theory for these projects, there has been a need to develop a systematic account of the various pictures of $\kk$-theory for $R^*$-algebras. In this paper, we will develop several of the alternate pictures of $\kk$-theory in the context of $R^*$-algebras and prove the appropriate equivalent theorems. In particular, in this paper we will consider the following pictures of $\kk$-theory and prove appropriate equivalence theorems for each:  the standard Kasparov bimodule picture of $\kk$-theory, the Fredholm picture (both in Section~2), the universal property picture (Section~3), and suspended $E$-theory using asymptotic morphisms (Section~4). Since this aspect of the theory goes through in the real case very much like the complex case, we present these results in a survey-like overview.

As part of this, we prove a more general theorem which reduces the work required to replicate many of these equivalent theorems and 
promises to ease the way for similar projects in the future. Suppose that
$\mu \colon F \rightarrow G$
is a natural transformation between homotopy invariant, stable, half exact functors. We prove that if $\mu$ is an isomorphism for all $C^*$-algebras, then it is an isomorphism for $R^*$-algebras. This is accomplished in Section~3 when we develop the universal properties of $\kk$-theory and $K$-theory for $R^*$-algebras.

In the last two sections, we will apply these ideas, using $\kk$-theory to prove the existence of certain asymptotic morphisms, which in turn is used to obtain new results for the problem of approximating a set of almost commuting matrices over the field of real numbers. In particular, let the Halmos number be the largest integer $d$ such that whenever $d$ real self-adjoint matrices almost commute (pairwise) they can be approximated by $d$ pairwise commuting matrices.  More precisely, for all $\ve > 0$ there should be a $\delta > 0$ such that if $\{ H_i \}_{i = 1}^d$ is a collection of $d$ self-adjoint matrices such that 
$$ \| H_r \| \leq 1 \text{ \qquad and \qquad} \| [ H_r, H_s ] \| \leq \delta \; ,$$
for all $r,s$, then there exists a collection $\{ K_i \}_{i = 1}^d$ of self-adjoint matrices such that
$$ \| K_r \| \leq 1 \text{ \qquad and \qquad} \| [ K_r, K_s ] \| = 0  \text{ \qquad and \qquad} \| H_r - K_r \| \leq \ve   \; .$$
Furthermore, the dependence of $\delta$ on $\ve$ must be uniform, independent of the dimension of the matrices $H_r$. It is shown in 
\cite{LorSorensenTorus} that in the context of real matrices, the statement is true for $d = 2$.  We will show in Section 7 the statement is false for $d = 5$.  Therefore, the Halmos number for real matrices is between 2 and 4, inclusive. 

\section{The Standard and Fredholm Pictures of $\kk$-Theory}

We start with the following definition of $\kk$-theory. It is essentially the same as that in \cite{kasparov80} where it was simultaneously developed for both $R^{*}$-algebras and $C^{*}$-algebras. It also appears in Section~2.3 of \cite{schroderbook} for $R^{*}$-algebras, Section~2.1 of \cite{jensenthomsenbook}, and the Appendix of \cite{higson87}.

\begin{defin}\label{def:kbimod}
Let $A$ and $B$ be graded separable $R^{*}$-algebras with $B$ assumed to be $\sigma$-unital.
\begin{itemize}
\item[(i)]  A Kasparov ($A$-$B$)-bimodule is a triple $( E , \phi , T )$ where $E$ is a countably generated graded real Hilbert $B$-module, $\ftn{ \phi }{ A }{ \mc{L}\sur ( E ) }$ is a graded $*$-homomorphism, and $T$ is an element of $\mc{L}\sur ( E )$ of degree 1 such that
\begin{equation*}
( T - T^{*} ) \phi ( a ), ( T^{2} - 1 ) \phi ( a ) , \ \mathrm{and} \ [ T , \phi ( a ) ] 
\end{equation*}
lie in $\K\sur(E)$ for all $a \in A$.

\item[(ii)] Two triples $( E_{i} , \phi_{i} , T_{i} )$ are \emph{unitarily equivalent} if there is unitary $U$ in $\mc{L}\sur ( E_{0} , E_{1} )$, of degree zero, intertwining both $\phi_{i}$ and $T_{i}$ in the appropriate way.

\item[(iii)] Let $(E, \phi, T)$ be a Kasparov ($A$-$B$)-bimodule and let $\beta \colon B \rightarrow B'$ be a $*$-homomorphism of $R^{*}$-algebras.  Then the pushed-forward Kasparov ($A$-$B'$)-bimodule is defined by $$\beta_*(E,\phi,T) = (E \hat{\otimes}_\beta B', \phi \hat{\otimes} 1, T \hat{\otimes} 1) \; .$$

\item[(iv)]  Two Kasparov ($A$-$B$)-bimodules $(E_{i} , \phi_{i} , T_{i} )$ for $i = 0,1$ are \emph{homotopic} if there is a Kasparov bimodule ($A$-$IB)$, say $(E, \phi , T )$, such that $(\ve_i)_* (E, \phi, T)$ and $(E_i, \phi_i, T_i)$ are unitarily equivalent for $i = 0,1$, where $IB = C ([ 0 ,1] , B )$ and $\ve_{i}$ denotes the evaluation map.

\item[(v)]  A triple $(E, \phi , T )$ is \emph{degenerate} if the elements \begin{equation*}
( T - T^{*} ) \phi ( a ), ( T^{2} - 1 ) \phi ( a ) , \ \mathrm{and} \ [ T , \phi ( a ) ]
\end{equation*}
are zero for all $a \in A$.  By Proposition~2.3.3 of \cite{schroderbook}, degenerate bimodules are homotopic to trivial bimodules.
    
\item[(vi)] $\kk( A , B )$ is defined to be the set of homotopy equivalence classes of Kasparov ($A$-$B$)-bimodules.  
\end{itemize}
\end{defin}     

The following theorem summarizes the principal properties of $\kk$-theory for $R^{*}$-algebras from Chapter~2 of \cite{schroderbook}.

\begin{propo} \label{kkproperties}
$\kk(A,B)$ is an abelian group for separable $A$ and $\sigma$-unital $B$.
As a functor on separable $R^{*}$-algebras
(contravariant in the first argument and covariant in the second argument), it is homotopy invariant, stable, and has split exact sequences in both arguments.  Furthermore, there is a natural associate pairing (the intersection product)
$$\otimes_{KK} \colon \kk(A, C \otimes B) \otimes \kk(C \otimes A', B') \rightarrow \kk(A \otimes A', B \otimes B')  \; .$$
\end{propo}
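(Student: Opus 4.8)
The plan is to follow the classical development of Kasparov theory in the complex setting and to verify that at no stage does the construction use the complex structure; all of this is carried out in detail in Chapter~2 of \cite{schroderbook}, so I only sketch the architecture of the argument.

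\emph{The abelian group structure.} Addition of classes is induced by the direct sum $(E_0,\phi_0,T_0)\oplus(E_1,\phi_1,T_1)=(E_0\oplus E_1,\,\phi_0\oplus\phi_1,\,T_0\oplus T_1)$, which respects unitary equivalence and homotopy and is manifestly commutative and associative. Any degenerate bimodule serves as a neutral element: by Proposition~2.3.3 of \cite{schroderbook} it is homotopic to the zero bimodule, and homotopy is compatible with $\oplus$. The additive inverse of $(E,\phi,T)$ is the opposite bimodule $(E^{\mathrm{op}},\phi,-T)$, obtained by reversing the grading; the standard rotation homotopy $\theta\mapsto \sm{\cos\theta}{\sin\theta}{-\sin\theta}{\cos\theta}$ acting on $E\oplus E^{\mathrm{op}}$ connects $(E,\phi,T)\oplus(E^{\mathrm{op}},\phi,-T)$ to a degenerate bimodule. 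None of these moves uses complex scalars, so they remain valid over $\R$.

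\emph{Functoriality, homotopy invariance, and stability.} A $*$-homomorphism $f\colon A\to A'$ induces $f^{*}\colon\kk(A',B)\to\kk(A,B)$ by $(E,\phi,T)\mapsto(E,\phi\circ f,T)$, and $\beta\colon B\to B'$ induces $\beta_{*}\colon\kk(A,B)\to\kk(A,B')$ by the pushforward of Definition~\ref{def:kbimod}(iii); compatibility of these operations with $\oplus$ and with homotopies is routine. Homotopy invariance in both variables is almost immediate from the definitions, because a homotopy of $*$-homomorphisms is a $*$-homomorphism into $C([0,1],-)$, and pulling a Kasparov bimodule back, or pushing it forward, along such a map produces exactly a homotopy of bimodules in the sense of Definition~\ref{def:kbimod}(iv). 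Stability reduces to the fact that compression by a rank-one projection $e\in\K$ (the compact operators on a graded separable real Hilbert space) is an isomorphism $\kk(A,B\otimes\K)\xrightarrow{\;\cong\;}\kk(A,B)$ with inverse $(-)\otimes e$, and symmetrically in the first variable.

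\emph{Split exactness and the intersection product.} For a split extension $0\to I\xrightarrow{i}A\xrightarrow{p}A/I\to 0$ with section $s\colon A/I\to A$, the relation $p\circ s=\id$ already gives $s^{*}p^{*}=\id$ on $\kk(A/I,B)$ (and dually in the covariant variable), so the content is exactness in the middle, i.e.\ that the kernel of $i^{*}$ equals the image of $p^{*}$; this follows from Kasparov's excision, or Puppe-sequence, argument, executed over $\R$ in \cite{schroderbook}. Finally, the intersection product $x\otimes_{KK}y$ of $x=(E_1,\phi_1,T_1)$ and $y=(E_2,\phi_2,T_2)$ is constructed on the internal tensor product Hilbert module $E_1\hat\otimes_{C}E_2$ with representation $\phi_1\hat\otimes 1$ and operator a suitably chosen $T_1$-connection for $T_2$; one must prove that such an operator exists making the triple a Kasparov bimodule, that it is unique up to homotopy, and that the resulting pairing is natural and associative. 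This last step is where Kasparov's technical theorem — on quasicentral approximate units for derivations inside multiplier algebras — enters, and I expect the real-coefficient version of that theorem, together with the attendant connection formalism and the associativity bookkeeping, to be the main obstacle; everything preceding it is formal once the complex-case proofs are transcribed over $\R$.
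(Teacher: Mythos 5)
Your proposal is essentially the same as what the paper does: the paper offers no independent argument for this proposition, stating it as a summary of Chapter~2 of \cite{schroderbook} (where Kasparov's constructions — direct sum, rotation homotopies, functoriality, stability, split exactness, and the intersection product via connections and the technical theorem — are carried out for real $C^*$-algebras), and your sketch is an accurate outline of precisely that development with the technical core correctly deferred to the same source.
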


We now turn to the Fredholm picture of $\kk$-theory, which was developed in \cite{higson87} in the context of $C^{*}$-algebras. A simplified picture along these lines also appears in Chapter 4 of the text \cite{jensenthomsenbook}. As we show, the approach goes through the same for $R^{*}$-algebras, as follows.

\begin{defin}
Let $A$ and $B$ be separable $R^{*}$-algebras.
\begin{itemize}
\item[(i)]  A triple $( \phi_{+} , \phi_{-} , U )$, where 
$\ftn{ \phi_{ \pm } }{ A }{ \mc{M}({ \K_{\R} \otimes B }) }$ are $*$-homomorphisms, and $U$ is an element of $\mc{M}({ \K_{\R} \otimes B })$ such that
\begin{equation*}
U \phi_{+} ( a ) - \phi_{-} ( a ) U, \ \phi_{+} ( a ) ( U^{*} U - 1 ), \ \mathrm{and} \ \phi_{-} ( a ) ( U U^{*} - 1 ) 
\end{equation*}
lie in $\K_{\R} \otimes B$ for all $a \in A$ is called a $\kk(A,B)$-cycle.  

\item[(ii)]  Two $\kk ( A , B )$-cycles $( \phi_{+}^{1} , \phi_{-}^{1} , U^{1} )$ and $( \phi_{+}^{2} , \phi_{-}^{2} , U^{2} )$ are \emph{homotopic} if there is a $\kk ( A , IB  )$-cycle $(\phi_{+} , \phi_{-} , U )$ such that 
$$( \ve_{i} \phi_{+}, \ve_{i} \phi_{-} , \ve_{i} ( U ) ) = ( \phi_{+}^{i} , \phi_{-}^{i} , U^{i} ),$$
where 
$\ftn{ \ve_{i} } { \mc{M}( \K_{\R} \otimes IB) }{ \mc{M}( \K_{\R} \otimes B ) }$ is induced by evaluation at $i$.

\item[(iii)]  A $\kk ( A , B )$-cycle $(\psi_{+} , \psi_{-} , V )$ is \emph{degenerate} if the elements 
\begin{equation*}
V \psi_{+} ( a ) - \psi_{-} ( a ) V, \ \psi_{+} ( a ) ( V^{*} V - 1 ), \ \mathrm{and} \ \psi_{-} ( a ) ( V V^{*} - 1 ) 
\end{equation*}
are zero for all $a \in A$.     

\item[(iv)]  The sum $( \phi_{+} , \phi_{-} , U ) \oplus ( \psi_{+} , \psi_{-} ,V )$ of two $\kk ( A , B )$-cycles is the $\kk ( A , B )$-cycle 
\begin{equation*}
\left( \left( 
\begin{matrix}
\phi_{+} & 0 \\
0 & \psi_{+}
\end{matrix}
\right), 
\left( 
\begin{matrix}
\phi_{-}  & 0 \\
0 & \psi_{-}  
\end{matrix}
\right),
\left( 
\begin{matrix}
U & 0 \\
0 & V
\end{matrix}
\right)
\right)
\end{equation*}
where the algebra $M_2 ( \mc{M}( \K_{\R} \otimes B ) )$ is identified with $\mc{M}( \K_{\R} \otimes B )$ by means of some $*$-isomorphism $M_2  ( \K_{\R} ) \cong \K_{\R}$, which is unique up to homotopy by Section~1.17 of \cite{kasparov80}.

\item[(v)]  Two cycles $( \phi_{+}^{0}, \phi_{-}^{0} , U^{0} )$ and $( \phi_{+}^{1}, \phi_{-}^{1} , U^{1} )$ are said to be \emph{equivalent} if there exist degenerate cycles $( \psi_{+}^{0} , \psi_{-}^{0} , V^{0} )$ and $( \psi_{+}^{1} , \psi_{-}^{1} , V^{1} )$ such that 
\begin{equation*}
( \phi_{+}^{0} , \phi_{-}^{0} , U^{0} ) \oplus ( \psi_{+}^{0}, \psi_{-}^{0}, V^{0} ) \quad \mathrm{and} \quad ( \phi_{+}^{1} , \phi_{-}^{1} , U^{1} ) \oplus ( \psi_{+}^{1}, \psi_{-}^{1}, V^{1} )
\end{equation*}  
are homotopic.  

\item[(vi)] $\textbf{KK} ( A , B )$ is defined to be the set of equivalence classes of $\kk ( A , B )$-cycles.

\end{itemize}
\end{defin}

The following lemma is the real version of Lemma~2.3 of \cite{higson87}, with the same proof.

\begin{propo}  \label{lem:fhgrp}
$\textbf{KK}( A , B )$ is an abelian group, for separable $R^{*}$-algebras $A$ and $B$.  As a functor it is contravariant in the first argument and covariant in the second argument.
\end{propo}

\begin{theor}\label{prop:eqdef}
Let $A$ and $B$ be separable $R^{*}$-algebras (with the trivial grading).  Then $\kk ( A , B )$ is isomorphic to $\textbf{KK} ( A , B )$.
\end{theor}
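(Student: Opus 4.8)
The plan is to construct explicit maps in both directions and verify that they are mutually inverse group homomorphisms; throughout I take $A$ and $B$ to be trivially graded. First I would send a $\kk(A,B)$-cycle $(\phi_{+},\phi_{-},U)$ in the Fredholm sense to the triple
\[
\left( \hat{\Hil}_B,\ \begin{pmatrix} \phi_{+} & 0 \\ 0 & \phi_{-} \end{pmatrix},\ \begin{pmatrix} 0 & U^{*} \\ U & 0 \end{pmatrix} \right),
\]
where $\hat{\Hil}_B$ carries the grading fixed in the discussion above, so that the middle entry is a $*$-homomorphism of degree $0$ and the right entry is a self-adjoint element of degree $1$, once $\mc{L}(\hat{\Hil}_B)$ is identified with $M_2(\multialg{\K_\R\otimes B})$ and $M_2(\K_\R)$ with $\K_\R$. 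A short computation, using that $A$ is a $*$-algebra so that one may pass freely between $a$ and $a^{*}$, shows that the three expressions appearing in Definition~\ref{def:kbimod}(i) for this triple are precisely the matrix entries of the three expressions in the definition of a $\kk(A,B)$-cycle, and hence lie in $\K(\hat{\Hil}_B)\cong\K_\R\otimes B$; thus the triple is a Kasparov $(A$-$B)$-bimodule, and it is countably generated because $\hat{\Hil}_B$ is. I would then check that this assignment is compatible with the relevant equivalence relations: degenerate cycles map to degenerate bimodules, a homotopy of cycles over $IB$ maps to a Kasparov $(A$-$IB)$-bimodule whose evaluations at $0$ and $1$ recover the images of the endpoint cycles, and the sum of cycles maps to the direct sum of bimodules under the identification $M_2(\K_\R)\cong\K_\R$. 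Consequently the assignment descends to a group homomorphism $\Psi\colon\textbf{KK}(A,B)\to\kk(A,B)$.

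For the reverse direction, given a Kasparov bimodule $(E,\phi,T)$ I would first use the operator homotopy $T_{s}=T-\tfrac{s}{2}(T-T^{*})$ — which runs through Kasparov bimodules because $\K(E)$ is an ideal of $\mc{L}(E)$, and which is available over $\R$ — to reduce to the case $T=T^{*}$. Then I would adjoin the degenerate bimodule $(\hat{\Hil}_B,0,S)$ with $S$ a symmetry of degree $1$, and apply Kasparov's stabilization theorem for countably generated graded Hilbert $B$-modules (available in the $R^{*}$-setting, cf.~\cite{schroderbook}) to produce a degree-$0$ unitary $E\oplus\hat{\Hil}_B\cong\hat{\Hil}_B$. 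Conjugating through this unitary, the bimodule becomes unitarily equivalent, hence equal in $\kk(A,B)$, to one of the form $(\hat{\Hil}_B,\phi',T')$ with $\phi'$ of degree $0$ — necessarily block diagonal, $\phi'=\operatorname{diag}(\phi_{+},\phi_{-})$ — and $T'$ self-adjoint of degree $1$ — necessarily $T'=\sm{0}{U^{*}}{U}{0}$ for $U$ its lower-left corner. Reading off $(\phi_{+},\phi_{-},U)$ gives a $\kk(A,B)$-cycle, the compactness conditions translating directly. One must then check independence of the auxiliary data: two self-adjoint normalizations of $T$ are joined by an operator homotopy, and a change of $S$ or of the stabilizing unitary alters the output cycle only by adding a degenerate cycle or by a homotopy — carried consistently along the two endpoints when one stabilizes a homotopy over $IB$ — so this produces a well-defined map $\Phi\colon\kk(A,B)\to\textbf{KK}(A,B)$.

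It then remains to see that $\Psi$ and $\Phi$ are mutually inverse. Starting from a Fredholm cycle, $\Phi\circ\Psi$ only adjoins the degenerate cycle coming from $(\hat{\Hil}_B,0,S)$, which is absorbed by the equivalence relation defining $\textbf{KK}(A,B)$; starting from a Kasparov bimodule, $\Psi\circ\Phi$ returns a bimodule unitarily equivalent to the original after adjoining a degenerate bimodule, hence homotopic to it by Definition~\ref{def:kbimod}(v) together with Proposition~\ref{kkproperties}. Since $\Psi$ is a bijective group homomorphism, this yields the asserted isomorphism. The step I expect to require the most care is the reverse direction: one must confirm that Kasparov's stabilization theorem and the operator-homotopy normalization of $T$ pass verbatim to the graded real setting, one must follow the grading faithfully through the chain $\mc{L}(\hat{\Hil}_B)\cong M_2(\multialg{\K_\R\otimes B})\cong\multialg{\K_\R\otimes B}$ set up in the remarks preceding the statement, and — most delicately — one must arrange the normalization and stabilization choices coherently across the endpoints of a homotopy so that $\Phi$ is genuinely well defined.
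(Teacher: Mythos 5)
Your construction is correct and in substance coincides with the paper's: the forward map sending $(\phi_{+},\phi_{-},U)$ to $\bigl(\hat{\Hil}_{B},\,\operatorname{diag}(\phi_{+},\phi_{-}),\,\sm{0}{U^{*}}{U}{0}\bigr)$ is exactly the map $\alpha$ used there, with the same checks for degenerates, sums, and homotopies. The only genuine difference is organizational, and it concerns the reverse direction: the paper never constructs a map $\kk(A,B)\to\textbf{KK}(A,B)$ at all, but instead proves that the single homomorphism $\overline{\alpha}$ is surjective and injective, in both cases invoking Proposition~2.3.5 of \cite{schroderbook} (the real graded stabilization-plus-self-adjoint-normalization statement, which is precisely what you re-derive via the operator homotopy $T_{s}$ and the graded stabilization theorem) to put an arbitrary bimodule, respectively an arbitrary homotopy bimodule over $IB$, into the standard form on $\hat{\Hil}_{B}$ (resp.\ $\hat{\Hil}_{B\otimes C[0,1]}$) and then read off a cycle, resp.\ a cycle homotopy. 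That formulation buys economy: it only needs the \emph{existence} of some standardization for each bimodule and each homotopy, so the delicate points you yourself flag -- independence of $\Phi$ from the choice of normalization, symmetry $S$, and stabilizing unitary; coherence of these choices at the two endpoints of a homotopy; and the implicit fact that Fredholm cycles conjugated by a degree-zero unitary of $\multialg{\K_{\R}\otimes B}$ define the same class in $\textbf{KK}(A,B)$ -- never arise. Your explicit two-sided inverse is a legitimate alternative and makes the comparison of the two pictures more concrete, but those well-definedness verifications (especially the last one, which is not part of the stated equivalence relation on cycles and requires the usual add-a-degenerate-and-rotate argument) are real work that your sketch defers; if you keep this route you should carry them out in full, or else dispense with $\Phi$ and prove bijectivity of $\Psi$ directly as the paper does.
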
        

\begin{proof}[Sketch of proof.]
Let $\Hil_B$ be the Hilbert $B$-module consisting of all sequences $\{b_n\}_{n = 1}^\infty$ in $B$ such that $\sum_{n = 1}^\infty b_n^* b_n$ converges. Let $\hat{\Hil}_B = \Hil_B \oplus \Hil_B$ be the graded Hilbert $B$-module with $\hat{\Hil}_B^{(0)} = \Hil_B \oplus 0$ and $\hat{\Hil}_B^{(1)} = 0 \oplus \Hil_B$. This induces a grading on $\mathcal{L}\sur(\hat{ \Hil}_B) \cong  \multialg{ \K_{\R} \otimes B } $.

Given a $\kk ( A , B )$-cycle $x = ( \phi_{+} , \phi_{-} , U )$ we define 
\begin{equation*}
\alpha ( x ) = \left( \hat{ \Hil }_{ B } , \left( \begin{matrix} \phi_{+} & 0 \\ 0 & \phi_{-} \end{matrix} \right)  ,  \left( \begin{matrix} 0 & U^{*} \\  U & 0 \end{matrix} \right) \right) \;.
\end{equation*}
It is verified that $ \left( \begin{matrix} 0 & U^{*} \\  U & 0 \end{matrix} \right) $ has degree $1$ and that $\alpha(x)$ is indeed a Kasparov ($A$-$B$) bimodule. That $\alpha$ induces a well-defined isomorphism
$$\overline{\alpha} \colon  \textbf{KK} ( A , B ) \rightarrow \kk(A, B) \; $$
can be shown by adapting the methods of the Appendix of \cite{higson87} or Chapter 4 of \cite{jensenthomsenbook}.
\end{proof}
        
\section{The Universal Property of $\kk$-Theory } 

Let $F$ be a functor from the category {\catr}  of separable $R^{*}$-algebras to the cateogry {\catabgp} of abelian groups.  We say that $F$ is
\begin{enumerate}
\item[(i)] {\it homotopy invariant} if $(\alpha_1)_* = (\alpha_2)_*$ whenever $\alpha_1$ and $\alpha_2$ are homotopic $*$-homomorphisms on the level of $R^{*}$-algebras.
\item[(ii)] {\it stable} if $(e_A)_* \colon F(A) \rightarrow F(\K_{\R} \otimes A)$ is an isomorphism for the inclusion $e_A \colon A \hookrightarrow \K_{\R} \otimes A$ defined via any rank one projection.
\item[(iii)] {\it split exact} if any split exact sequence of separable $R^{*}$-algebras
$$0 \rightarrow A \rightarrow B \rightarrow C \rightarrow 0$$
induces a split exact sequence 
$$0 \rightarrow F(A) \rightarrow F(B) \rightarrow F(C) \rightarrow 0 \; .$$
\item[(iv)] {\it half exact} if any short exact sequence of separable $R^{*}$-algebras
$$0 \rightarrow A \rightarrow B \rightarrow C \rightarrow 0$$
induces an exact sequence 
$$F(A) \rightarrow F(B) \rightarrow F(C) \; .$$
\end{enumerate}

In what follows we will see that if $F$ is homotopy invariant and half exact, then it is split exact.

\begin{propo} \label{stability}
If $F$ is a functor from {\catr} to {\catabgp} that is homotopy invariant, then the functor $F_s$ defined by 
$F_s(A) = F(\K_{\R} \otimes A)$ is homotopy invariant and stable.
\end{propo}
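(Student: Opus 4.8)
The plan is to deduce both halves from the homotopy invariance of $F$ itself; the only genuine work is a connectedness statement about isometries on an infinite-dimensional real Hilbert space, needed for the stability half. To see that $F_s$ is homotopy invariant, take homotopic $*$-homomorphisms $\alpha_0, \alpha_1 \colon A \to B$, witnessed by a $*$-homomorphism $H \colon A \to IB$ with $\ve_i \circ H = \alpha_i$; tensoring with $\id_{\K_\R}$ and using the canonical identification $\K_\R \otimes IB \cong I(\K_\R \otimes B)$ (which just reorders the $C[0,1]$ factor) produces a homotopy between $\id_{\K_\R} \otimes \alpha_0$ and $\id_{\K_\R} \otimes \alpha_1$ as $*$-homomorphisms $\K_\R \otimes A \to \K_\R \otimes B$, so $F_s(\alpha_0) = F(\id_{\K_\R} \otimes \alpha_0) = F(\id_{\K_\R} \otimes \alpha_1) = F_s(\alpha_1)$ by homotopy invariance of $F$. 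This part is routine.

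For stability, fix a rank-one projection $e \in \K_\R$ and a $*$-isomorphism $\mu \colon \K_\R \otimes \K_\R \to \K_\R$; I must show $F_s(e_A) = F(\id_{\K_\R} \otimes e_A) \colon F(\K_\R \otimes A) \to F(\K_\R \otimes \K_\R \otimes A)$ is an isomorphism. Since $\mu \otimes \id_A$ is a $*$-isomorphism $\K_\R \otimes \K_\R \otimes A \to \K_\R \otimes A$, the map $F(\mu \otimes \id_A)$ is an isomorphism, and by functoriality
\begin{equation*}
F(\mu \otimes \id_A) \circ F(\id_{\K_\R} \otimes e_A) = F\bigl( (\mu \otimes \id_A) \circ (\id_{\K_\R} \otimes e_A) \bigr) = F(\nu \otimes \id_A),
\end{equation*}
where $\nu \colon \K_\R \to \K_\R$ is the ``corner embedding'' $\nu(x) = \mu(x \otimes e)$. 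Hence it is enough to show that $\nu$ is homotopic to $\id_{\K_\R}$ through $*$-homomorphisms: tensoring such a homotopy $\K_\R \to I\K_\R$ with $A$ and using $(I\K_\R) \otimes A \cong I(\K_\R \otimes A)$ shows $\nu \otimes \id_A$ is homotopic to $\id_{\K_\R \otimes A}$, so $F(\nu \otimes \id_A) = \id$ by homotopy invariance of $F$, and therefore $F(\id_{\K_\R} \otimes e_A) = F(\mu \otimes \id_A)^{-1}$ is an isomorphism.

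The hard part is the claim that $\nu$ is homotopic to $\id_{\K_\R}$. Writing $\K_\R = \K(\Hil)$ for a separable infinite-dimensional real Hilbert space $\Hil$, realizing $\mu$ as conjugation by a unitary $U \colon \Hil \otimes \Hil \to \Hil$, and taking $e$ to be the projection onto $\R v$ for a unit vector $v$, a short computation gives $\nu(x) = w x w^{*}$ for the isometry $w = U \circ (\xi \mapsto \xi \otimes v)$, whose range has infinite codimension, whereas $\id_{\K_\R}(x) = w x w^{*}$ with $w = 1$. Since $t \mapsto (x \mapsto w_t x w_t^{*})$ is a homotopy of $*$-endomorphisms of $\K_\R$ whenever $t \mapsto w_t$ is a norm-continuous path of isometries of $\Hil$, it remains to connect $w$ to $1$ through isometries of $\Hil$. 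Over $\C$ this is classical; over $\R$ I would invoke the real form of Kuiper's theorem --- the orthogonal group of an infinite-dimensional real Hilbert space is norm-contractible, and the space of linear isometries of $\Hil$ is likewise path-connected --- which is the single ingredient that lets the complex-case argument run verbatim. I expect this Kuiper-type input to be the main obstacle, essentially the only one; it is also the phenomenon underlying the fact that the rank-one inclusion $e_A$ is well defined up to homotopy in the first place.
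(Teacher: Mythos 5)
Your reduction is sound and is essentially the paper's own route: the paper likewise reduces everything to the statement that the rank-one embedding $e_{\K_\R}\colon \K_\R\to\K_\R\otimes\K_\R$ is homotopic to a $*$-isomorphism, and the homotopy-invariance half is routine exactly as you say. The gap is in the step you yourself flagged as the main obstacle, and it is fatal as stated: there is \emph{no} norm-continuous path of isometries of $\Hil$ from $w$ to $1$, over $\R$ or over $\C$. Indeed, if $t\mapsto w_t$ were such a path, then $t\mapsto w_tw_t^*$ would be a norm-continuous path of projections joining $ww^*$, whose complement has infinite rank, to $1$; since projections at distance less than $1$ are unitarily equivalent, the rank of $1-w_tw_t^*$ is locally constant along the path, a contradiction. (Equivalently, an isometry is bounded below and its corank is a locally constant semi-Fredholm-type invariant, so the norm path components of the isometry semigroup are indexed by the corank $0,1,2,\dots,\infty$.) Kuiper's theorem --- real or complex --- concerns the group of \emph{surjective} isometries and cannot bridge a proper isometry to a unitary; and the complex-case fact you call classical is classical only for paths continuous in the strong-$*$ (strict) topology, not in norm, so this is not a real-versus-complex issue at all.

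The repair is to weaken the continuity you demand of the path, which your own argument permits: to get a homotopy of $*$-homomorphisms you only need $t\mapsto w_t x w_t^*$ to be norm-continuous for each fixed $x\in\K_\R$, and this already holds when $t\mapsto w_t$ is a strong-$*$ (strictly) continuous path of isometries, because compact operators convert strong-$*$ convergence of the $w_t$ into norm convergence of $w_tx$ and $xw_t^*$. Such a path from $1$ to an isometry of infinite corank does exist --- for instance the right-translation semigroup on $L^2([0,\infty))\otimes\Hil\cong\Hil$ --- and any two isometries of the same (infinite) corank are norm-connected through isometries (write one as a unitary times the other and use norm-connectedness of the orthogonal group, where real Kuiper is legitimately used); concatenating gives a strictly continuous path from $1$ to $w$, and $\mathrm{Ad}(w_t)$ then yields the desired homotopy $\nu\simeq\id_{\K_\R}$. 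This is in substance the argument behind the complex-case reference the paper quotes (Theorem~4.1.13 of \cite{jensenthomsenbook}); with that replacement the remainder of your proof goes through.
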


\begin{proof}
Just as in the complex case (Theorem~4.1.13 of \cite{jensenthomsenbook}), the map $e_{ \K_{\R}} \colon  \K_{\R} \rightarrow \K_{\R} \otimes \K_{\R}$ is homotopic to an isomorphism.
\end{proof}

The following theorem is the version for $R^{*}$-algebras of Theorem~3.7 of \cite{higson87} and Theorem~22.3.1 of \cite{blackadarbook}.

\begin{theor} \label{thm:univprop1}
Let $F$ be a functor from {\catr} to {\catabgp} that is homotopy invariant, stable, and split exact.  Then there is a unique natural pairing $\alpha \colon F(A) \otimes \kk(A,B) \rightarrow F(B)$ such that $\alpha(x \otimes 1_A) = x$ for all $x \in F(A)$ and where $1_A \in \kk(A,A)$ is the class represented by the identity $*$-homomorphism.

Furthermore, the pairing respects the intersection product on $\kk$-theory in the sense that
$$\alpha(\alpha(x \otimes y) \otimes z) = \alpha(x \otimes ( y \otimes_B z)) \colon F(A) \otimes \kk(A,B) \otimes \kk(B,C) \rightarrow F(C) \; .$$
\end{theor}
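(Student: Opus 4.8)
The plan is to follow the Cuntz--Higson strategy behind Theorem~3.7 of \cite{higson87} and Theorem~22.3.1 of \cite{blackadarbook}, checking that no real-$C^*$ subtlety obstructs it and using the Fredholm picture of Theorem~\ref{prop:eqdef} to represent $\kk$-classes. The conceptual heart is an algebraic lemma about split exact functors. Let $F$ be homotopy invariant and split exact. Given a \emph{quasihomomorphism} from $A$ to $D$, that is, a pair of $*$-homomorphisms $\varphi_{\pm}\colon A\to\multialg{D}$ with $\varphi_+(a)-\varphi_-(a)\in D$ for all $a$, I would form the pullback $C^{*}$-algebra
\begin{equation*}
E=\setof{(m,a)\in\multialg{D}\oplus A}{m-\varphi_-(a)\in D},
\end{equation*}
which sits in a split exact sequence $0\to D\xrightarrow{\iota}E\xrightarrow{p}A\to 0$ with the two $*$-homomorphism splittings $s_{\pm}(a)=(\varphi_{\pm}(a),a)$ of $p$. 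Since $p_*\circ(s_+)_*=p_*\circ(s_-)_*=\id$, split exactness of $F$ forces $(s_+)_*-(s_-)_*$ to take values in $\iota_*F(D)$, and I define $F(\varphi_+,\varphi_-)\colon F(A)\to F(D)$ to be $\iota_*^{-1}\circ\bigl((s_+)_*-(s_-)_*\bigr)$. One then checks, exactly as in the complex case, that this is homotopy invariant in the pair, vanishes when $\varphi_+=\varphi_-$, is additive under direct sums (after the stabilization identifications of \cite{kasparov81}, Section~1.17), is functorial under composition with $*$-homomorphisms out of $D$, and equals $\eta_*$ when $(\varphi_+,\varphi_-)=(\eta,0)$ with $\eta\colon A\to D\subseteq\multialg{D}$; none of this is sensitive to the ground field.

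Next I would build the pairing. Given $y\in\kk(A,B)\cong\textbf{KK}(A,B)$, represent $y$ by a $\kk(A,B)$-cycle $(\phi_+,\phi_-,U)$ with values in $\multialg{\K_{\R}\otimes B}$; after adding a degenerate cycle and a short operator homotopy one arranges $U$ to be unitary, so that the pair $(\phi_+,\psi_-)$ with $\psi_-(a):=U^{*}\phi_-(a)U$ is a quasihomomorphism from $A$ to $\multialg{\K_{\R}\otimes B}$ with ideal $\K_{\R}\otimes B$ (because $U\phi_+(a)-\phi_-(a)U\in\K_{\R}\otimes B$). Using stability of $F$, i.e. the isomorphism $(e_B)_*\colon F(B)\to F(\K_{\R}\otimes B)$, I set
\begin{equation*}
\alpha(x\otimes y)=(e_B)_*^{-1}\bigl(F(\phi_+,\psi_-)(x)\bigr).
\end{equation*}
Well-definedness follows from the lemma together with Proposition~\ref{prop:eqdef} and Proposition~2.3.5 of \cite{schroderbook}: homotopic cycles give homotopic quasihomomorphisms, sums of cycles give direct sums, degenerate cycles give the zero map, and the reduction to unitary $U$ and the choice of $\K_{\R}$-identifications are canonical up to homotopy. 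The normalization is then a direct computation: the class of a $*$-homomorphism $\psi\colon A\to B$ is represented in the Fredholm picture by $(e_B\circ\psi,0,1)$, whose associated quasihomomorphism is $(e_B\circ\psi,0)$, so $F(e_B\circ\psi,0)=(e_B)_*\circ\psi_*$ and $\alpha(x\otimes[\psi])=\psi_*(x)$; taking $\psi=\id_A$ gives $\alpha(x\otimes 1_A)=x$. Naturality of $\alpha$ in both variables is inherited from the functoriality built into the lemma.

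For uniqueness I would argue that any natural pairing satisfying $\alpha(\cdot\otimes 1_A)=\id$ is forced to equal $\psi_*$ on the class of each $*$-homomorphism $\psi$, and hence, by additivity in the $\kk$-slot and naturality applied to the split extensions $0\to\K_{\R}\otimes B\to E\to A\to 0$ produced above (each Fredholm cycle yields such an extension with its two canonical homomorphism splittings, which represent $\pm y$ up to homotopy), is forced on all of $\kk(A,B)$; this is the argument of \cite{blackadarbook} transported verbatim. The remaining point, compatibility with the intersection product, is where I expect the real work: one must show that the Kasparov product corresponds, under the Fredholm description, to the composition of the associated quasihomomorphisms, so that $F(-,-)$'s functoriality under composition yields the identity $\alpha(\alpha(x\otimes y)\otimes z)=\alpha(x\otimes(y\otimes_B z))$. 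In the complex case this is a nontrivial computation, usually handled by first passing to the Cuntz description $\kk(A,B)\cong[qA,\K_{\R}\otimes B]$ (homotopy classes of $*$-homomorphisms), in which $1_A$ is the class of the canonical idempotent and the product becomes composition, thereby reducing the general identity to the case where one factor is the class of a $*$-homomorphism and the rest is a diagram chase. Accordingly I would first establish the Cuntz picture for $R^{*}$-algebras (deducible from the Fredholm picture) and then mimic the complex-case associativity proof; this step, rather than the construction of $\alpha$ itself, is the main obstacle.
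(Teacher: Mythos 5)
Your construction of the pairing coincides with the paper's: the paper also represents a class via the Fredholm picture of Theorem~\ref{prop:eqdef}, normalizes it (citing Lemma~3.6 of \cite{higson87}) to a cycle of the form $(\phi_+,\phi_-,1)$, and then invokes exactly the quasihomomorphism mechanism you describe --- the split exact sequence $0\to D\to E\to A\to 0$ with the two splittings $s_\pm$ and the map $\iota_*^{-1}\circ((s_+)_*-(s_-)_*)$ is Higson's Definitions~3.3--3.4, and the paper simply observes that those definitions and the proofs of Higson's Theorems~3.5 and 3.7 (well-definedness, naturality, the normalization $\alpha(x\otimes 1_A)=x$, uniqueness) are insensitive to the ground field. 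So for the first paragraph of the statement your proposal is essentially the paper's proof.

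Where you diverge is the ``furthermore'' clause, and this is also the one place where your argument is not actually carried out: you flag the compatibility with the intersection product as ``the main obstacle'' and propose to develop the Cuntz picture $\kk(A,B)\cong[qA,\K_{\R}\otimes B]$ for $R^*$-algebras and to prove that the Kasparov product corresponds to composition of quasihomomorphisms. That route could be made to work, but it is far heavier than needed and, as written, is a plan rather than a proof. The paper disposes of this step formally: it follows from the uniqueness clause you have already established. Concretely, fix $A$ and consider the auxiliary functor $G(B)=F(A)\otimes_{\Z}\kk(A,B)$; by Proposition~\ref{kkproperties} (and since tensoring with a fixed abelian group preserves split short exact sequences) $G$ is again homotopy invariant, stable, and split exact, so the uniqueness statement applies to it. Both $(x\otimes y)\otimes z\mapsto x\otimes(y\otimes_B z)$ and the canonically constructed pairing for $G$ are natural, biadditive, and send $(x\otimes y)\otimes 1_B$ to $x\otimes y$, hence they agree; and since the quasihomomorphism construction of $z_*\colon G(B)\to G(C)$ uses only functorial data, the natural transformation $\alpha\colon G\Rightarrow F$ intertwines $z_*^G$ with $z_*^F$. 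Chaining these two observations gives $\alpha(\alpha(x\otimes y)\otimes z)=\alpha(x\otimes(y\otimes_B z))$ with no Cuntz-picture or product-equals-composition analysis. You should replace your final step by this uniqueness argument (or at least recognize it as available); otherwise your write-up leaves the associativity claim resting on machinery that the paper never develops for real $C^*$-algebras.
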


\begin{proof}
Let $\Phi \in \kk(A,B)$.  Using Theorem~\ref{prop:eqdef} we represent $\Phi$ with a $\kk(A,B)$ cycle and as in Lemma~3.6 of \cite{higson87}, we may assume that this cycle has the form $(\phi_+, \phi_-, 1)$.  We use the same construction as in Definitions~3.3 and 3.4 in \cite{higson87}.  In that setting $F$ is assumed to be a functor from separable $C^{*}$-algebras, but it goes through the same for functors from separable $R^{*}$-algebras to any abelian category.  This construction produces a homomorphism $\Phi_* \colon F(A) \rightarrow F(B)$ and we then define $\alpha(x \otimes \Phi) = \Phi_*(x)$.  The proof of Theorems~3.7 and 3.5 of \cite{higson87} carry over in the real case to show that $\alpha$ is natural, is well-defined, satisfies $\alpha(x \otimes 1_A) = x$, and is unique.

That $\alpha$ respects the Kasparov product follows from the uniqueness statement.
\end{proof}

We also note the contravariant version of the result above.  If $F$ is a contravriant functor, otherwise satisfying the above hypotheses, then there is a pairing $\alpha \colon \kk(A, B) \otimes F(B) \rightarrow F(A)$ such that $\alpha(1_A \otimes x) = x$ for all $x \in F(A)$.

For any $R^{*}$-algebra $A$, we define $SA = \{ f \in C([0,1], A) \mid f(0) = f(1) = 0\}$, or equivalently up to $*$-isomorphism, 
$SA = C_0(\R, A)$. We similarly define $S^{-1}A = \{ f \in C_0(\R, A\suc) \mid f(-x) = \overline{f(x)} \}$. By iteration, $S^n A$ is defined for all $n \in \Z$.  Since $S S^{-1} \R$ is $\kk$-equivalent to $\R$, the formula $S^{n} S^m A \equiv S^{n+m} A$ holds up to $\kk$-equivalence for all $n,m \in \Z$. Then for any functor $F$ on {\catr} and any integer $n$, we define $F_n(A) = F(S^n(A))$.

\begin{corol} \label{period}
Let $F$ be a functor from {\catr} to {\catabgp} that is homotopy invariant, stable, and split exact.  Then $F_*(A)$ has the structure of a graded module over the ring $K_*(\R)$.  In particular, $F_n(A) \cong F_{n+8}(A)$ for all $n \in \Z$.
\end{corol}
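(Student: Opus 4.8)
The plan is to build the $K_*(\R)$-module structure on $F_*(A)$ directly from the universal pairing $\alpha$ of Theorem~\ref{thm:univprop1} and the exterior Kasparov product of Proposition~\ref{kkproperties}. Write $K_m(\R) = \kk(\R, S^m\R)$. Since $SS^{-1}\R$ is $\kk$-equivalent to $\R$, I would fix once and for all a coherent family of $\kk$-equivalences $S^p\R\otimes S^q\R\simeq S^{p+q}\R$ (so that the evident associativity constraints commute); together with the exterior product these make $K_*(\R) = \bigoplus_{m\in\Z}K_m(\R)$ a $\Z$-graded ring with unit $1_\R\in K_0(\R) = \kk(\R,\R)$, and I would similarly fix equivalences $S^p\R\otimes S^qA\simeq S^{p+q}A$. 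For $x\in F_n(A) = F(S^nA)$ and $y\in K_m(\R)$, the exterior product $y\boxtimes 1_{S^nA}$, read through the identifications $\R\otimes S^nA\cong S^nA$ and $S^m\R\otimes S^nA\simeq S^{m+n}A$, is a class $\sigma_n(y)\in\kk(S^nA, S^{m+n}A)$; I would then define
$$ y\cdot x := \alpha\bigl(x\otimes\sigma_n(y)\bigr)\in F(S^{m+n}A) = F_{m+n}(A). $$
This is additive in $x$ and in $y$, hence gives a graded biadditive pairing $K_*(\R)\times F_*(A)\to F_*(A)$.

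Next I would check the two module axioms. Unitality is immediate: $\sigma_n(1_\R) = 1_{S^nA}$, so $1_\R\cdot x = \alpha(x\otimes 1_{S^nA}) = x$ by Theorem~\ref{thm:univprop1}. For associativity, fix $y_1\in K_{m_1}(\R)$, $y_2\in K_{m_2}(\R)$, $x\in F_n(A)$. Unwinding $y_1\cdot(y_2\cdot x)$ and applying the product-compatibility identity $\alpha(\alpha(x\otimes y)\otimes z) = \alpha(x\otimes(y\otimes_B z))$ of Theorem~\ref{thm:univprop1},
$$ y_1\cdot(y_2\cdot x) = \alpha\Bigl(x\otimes\bigl(\sigma_n(y_2)\otimes_{S^{m_2+n}A}\sigma_{m_2+n}(y_1)\bigr)\Bigr), $$
whereas $(y_1 y_2)\cdot x = \alpha\bigl(x\otimes\sigma_n(y_1 y_2)\bigr)$; so it suffices to verify $\sigma_n(y_1 y_2) = \sigma_n(y_2)\otimes_{S^{m_2+n}A}\sigma_{m_2+n}(y_1)$ in $\kk(S^nA, S^{m_1+m_2+n}A)$. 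But $\sigma_n(y_1 y_2) = (y_1\boxtimes y_2)\boxtimes 1_{S^nA} = y_1\boxtimes\sigma_n(y_2)$ by the definition of the ring product and associativity of the exterior product, and $y_1\boxtimes\sigma_n(y_2) = \sigma_n(y_2)\otimes_{S^{m_2+n}A}\sigma_{m_2+n}(y_1)$ is the standard identity relating the exterior and composition products, itself an instance of the associativity and functoriality of the Kasparov product of Proposition~\ref{kkproperties}. The only point requiring genuine care is that the fixed identifications $S^p\R\otimes S^qA\simeq S^{p+q}A$ be chosen coherently, and consistently with those defining the ring $K_*(\R)$; this coherence bookkeeping is in effect the whole content of the verification, and it is the step I expect to consume most of the work, although nothing deep is at stake.

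Finally, to deduce the periodicity statements: by real Bott periodicity (see \cite{schroderbook}), $K_8(\R)\cong\Z$ and a generator $\beta$ is a unit of the graded ring $K_*(\R)$, with inverse $\beta^{-1}\in K_{-8}(\R)$ satisfying $\beta\beta^{-1} = \beta^{-1}\beta = 1_\R$; hence, by the module axioms just established, multiplication by $\beta$ is a homomorphism $F_n(A)\to F_{n+8}(A)$ whose two-sided inverse is multiplication by $\beta^{-1}$, and taking $n=0$ gives $F(A)\cong F(S^8A)$. For the remaining isomorphism, $S^{-1}SA = S^{-1}\R\otimes S\R\otimes A$ is $\kk$-equivalent to $A$: tensoring the $\kk$-equivalence $SS^{-1}\R\simeq\R$ with $\mathrm{id}_A$ via the exterior product of Proposition~\ref{kkproperties} produces mutually inverse classes in $\kk(S^{-1}SA, A)$ and $\kk(A, S^{-1}SA)$, and applying $\alpha$ to these classes and invoking Theorem~\ref{thm:univprop1} yields mutually inverse homomorphisms $F(S^{-1}SA)\to F(A)$ and $F(A)\to F(S^{-1}SA)$. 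The contravariant case would be handled identically using the contravariant pairing recorded after Theorem~\ref{thm:univprop1}.
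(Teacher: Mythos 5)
Your proposal is correct and follows essentially the same route as the paper: both define the action of $K_*(\R)$ on $F_*(A)$ by pairing $x$ (via the universal pairing $\alpha$ of Theorem~\ref{thm:univprop1}) with the class obtained from $y$ by exterior multiplication with the identity class of the (suspended) algebra $A$ using the intersection product of Proposition~\ref{kkproperties}, and both deduce $F(S^8A)\cong F(A)$ and $F(S^{-1}SA)\cong F(A)$ from the $\kk$-equivalences $\R\simeq S^8\R$ and $\R\simeq S^{-1}S\R$. Your write-up merely makes explicit the module-axiom and coherence checks that the paper leaves implicit.
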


\begin{proof}
For all separable $A$ and $\sigma$-unital $B$, the pairing of Proposition~\ref{kkproperties} gives $\kk_*(A,B)$ the structure of a module over $\kk_*(\R, \R)$.  Taking $A = B$, we define a graded ring homomorphism $\beta$ from $K_*(\R) \cong \kk_*(\R, \R)$ to $\kk_*(A,A)$ by multiplication by $1_A \in \kk(A,A)$. Then for any $x \in F_m(A)$ and $y \in K_n(\R)$ we define 
$x \cdot y = \alpha(x \otimes \beta(y)) \in F_{n+m}(A)$.
\end{proof}

Similarly, the pairing Theorem~\ref{thm:univprop1} extends to a well-defined graded pairing 
$$\alpha \colon F_*(A) \otimes \kk_*(A,B) \rightarrow F_*(B) \; .$$

Let {\catkk} be the category whose objects are separable $R^{*}$-algebras and the set of morphisms from $A$ to $B$ is $\kk(A , B )$.  There is a canonical functor $\kk$ from {\catr} to {\catkk} that takes an object $A$ to itself and which takes a $*$-homomorphism $f \colon A \rightarrow B$ to the corresponding element $[f] \in \kk(A,B)$.    

\begin{corol}\label{thm:univprop2}
Let $F$ be a functor from {\catr} to {\catabgp} that is homotopy invariant, stable, and split exact. 
Then there exists a unique functor $\ftn{ \hat{F} }{ \catkk }{ \textbf{A} }$ such that $\hat{F} \circ \kk = F$.
\end{corol}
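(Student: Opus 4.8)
The plan is to construct $\hat F$ directly from the pairing $\alpha \colon F(A) \otimes \kk(A,B) \to F(B)$ supplied by Theorem~\ref{thm:univprop1}. On objects one is forced to set $\hat F(A) = F(A)$, since $\kk$ is the identity on objects and we demand $\hat F \circ \kk = F$. For a morphism $\Phi \in \kk(A,B) = \hoom_{\catkk}(A,B)$, I would define the homomorphism of abelian groups $\hat F(\Phi) \colon F(A) \to F(B)$ by $\hat F(\Phi)(x) = \alpha(x \otimes \Phi)$; this is additive because $\alpha$ is additive in its first variable.

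The first thing to check is that $\hat F$ is a functor. Preservation of identities is immediate from the normalization $\alpha(x \otimes 1_A) = x$ in Theorem~\ref{thm:univprop1}. Preservation of composition is exactly the associativity (intersection product) clause of that theorem: composition of $\Phi \in \kk(A,B)$ with $\Psi \in \kk(B,C)$ in $\catkk$ is the Kasparov product $\Phi \otimes_B \Psi$, and
$$\hat F(\Psi)\bigl(\hat F(\Phi)(x)\bigr) = \alpha\bigl(\alpha(x \otimes \Phi) \otimes \Psi\bigr) = \alpha\bigl(x \otimes (\Phi \otimes_B \Psi)\bigr) = \hat F(\Phi \otimes_B \Psi)(x).$$

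Next I would verify $\hat F \circ \kk = F$ on morphisms. For a $*$-homomorphism $f \colon A \to B$ one has $[f] = f_*(1_A)$, where $f_* \colon \kk(A,A) \to \kk(A,B)$ is the map induced by covariance in the second variable; naturality of $\alpha$ in its second argument then gives $\hat F([f])(x) = \alpha(x \otimes f_*(1_A)) = F(f)(\alpha(x \otimes 1_A)) = F(f)(x)$, so $\hat F([f]) = F(f)$, as required.

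It remains to prove uniqueness, which I expect to be the only point requiring any care. If $G \colon \catkk \to \textbf{A}$ is another functor with $G \circ \kk = F$, then $G$ agrees with $\hat F$ on objects, and the assignment $\alpha_G(x \otimes \Phi) := G(\Phi)(x)$ defines a pairing. Functoriality of $G$ gives $\alpha_G(x \otimes 1_A) = G(1_A)(x) = x$, and the hypothesis $G \circ \kk = F$, combined with functoriality of $G$, forces $\alpha_G$ to be natural in both variables; here one uses that the relevant naturality squares are assembled from honest $*$-homomorphisms, whose $\kk$-classes $G$ sends to $F$ of the original maps. Thus $\alpha_G$ satisfies the defining conditions of $\alpha$ in Theorem~\ref{thm:univprop1}, so by the uniqueness clause there $\alpha_G = \alpha$, whence $G(\Phi) = \hat F(\Phi)$ for every morphism $\Phi$. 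The main obstacle, such as it is, lies in this last step: one must carefully match the notion of ``natural pairing'' used in Theorem~\ref{thm:univprop1} with the categorical functoriality of $G$, translating between $\kk$-morphisms coming from $*$-homomorphisms and general $\kk$-classes; everything else is formal.
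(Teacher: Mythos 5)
Your proposal is correct and follows essentially the same route as the paper: define $\hat F$ on objects by $F$ and on morphisms by $x \mapsto \alpha(x \otimes \Phi)$, then verify $\hat F \circ \kk = F$ on morphisms using naturality of $\alpha$, the normalization $\alpha(x \otimes 1_A) = x$, and the identity $f_*(1_A) = [f]$. You additionally spell out the preservation of composition (via the intersection-product compatibility) and the uniqueness of $\hat F$ (by feeding the pairing $\alpha_G$ back into the uniqueness clause of Theorem~\ref{thm:univprop1}), details the paper leaves implicit but which match its intended argument.
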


\begin{proof}
This statement is proved as in Section~2.8 of \cite{higson87}.
\end{proof}

\begin{propo} \label{les1}
Let $F$ be a functor from {\catr} to {\catabgp} that is homotopy invariant and half exact.  Then for any short exact sequence 
$$0 \rightarrow A \xrightarrow{f} B \xrightarrow{g} C \rightarrow 0$$ 
there is a natural boundary map $\partial \colon F(SC) \rightarrow F(A)$ that fits into a (half-infinite) long exact sequence
$$\dots \rightarrow F(SB) \xrightarrow{g_*} F(SC) \xrightarrow{\partial} 
	 F(A) \xrightarrow{f_*} F(B) \xrightarrow{g_*} F(C) \; .$$
\end{propo}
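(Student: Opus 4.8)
The plan is to follow the classical argument (as in Blackadar's book, Section~21, or Higson's treatment) that deduces the long exact sequence from half-exactness via mapping cones, adapted to the real setting. First I would set up the mapping cone $C_g = \{(b,f) \in B \oplus IC \mid f(0) = 0, \, f(1) = g(b)\}$ of the surjection $g \colon B \to C$, where $IC = C([0,1],C)$. There is a natural $*$-homomorphism $\iota \colon A \to C_g$ sending $a \mapsto (f(a), 0)$, and a short exact sequence $0 \to SC \to C_g \xrightarrow{\pi} B \to 0$ coming from the projection $\pi(b,h) = b$. The key structural fact to establish is that $\iota \colon A \to C_g$ induces an \emph{isomorphism} $\iota_* \colon F(A) \to F(C_g)$; granting this, applying $F$ to the short exact sequence $0 \to SC \to C_g \to B \to 0$ and transporting along $\iota_*^{-1}$ yields the piece $F(SC) \xrightarrow{\partial} F(A) \xrightarrow{f_*} F(B)$, and splicing mapping cones of $g$ and of the induced maps at each stage produces the half-infinite sequence. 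Exactness at $F(B)$ and $F(C)$ is then just half-exactness of $F$ applied to $0 \to A \to B \to C \to 0$ directly; exactness at $F(SC)$ and further to the left follows by the same mapping-cone device applied one stage up (using that $SB \to C_g$-type cones interact correctly, and that suspension is exact: $0 \to SA \to SB \to SC \to 0$).

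To prove that $\iota_* \colon F(A) \to F(C_g)$ is an isomorphism, I would argue as in the complex case. The map $\iota$ factors through the kernel of the evaluation-type map, and one compares $C_g$ with the mapping cylinder; the standard trick is that the inclusion of $A$ (identified with the "honest" part of the cone) into $C_g$ is a homotopy equivalence onto a subalgebra after one uses that $C$ is contractible in the relevant cone. More precisely: there is a commuting diagram of short exact sequences relating $0 \to A \to B \to C \to 0$ to $0 \to C_g \to Z_g \to C \to 0$ where $Z_g$ is the mapping cylinder (which is homotopy equivalent to $B$), and half-exactness plus homotopy invariance plus the five-lemma-style diagram chase forces $\iota_*$ to be an isomorphism. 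This is exactly where I would lean on Proposition~\ref{les1}'s hypotheses: homotopy invariance to collapse the cylinder $Z_g \simeq B$, and half-exactness to get the comparison of the two sequences to be exact so the chase goes through.

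The main obstacle I anticipate is not conceptual but bookkeeping: one must check that all the mapping cone and suspension constructions carry over verbatim to $R^*$-algebras. The functions involved take values in the real $C^*$-algebras $B$, $C$, etc., so $IC$, $SC$, $C_g$ are again $R^*$-algebras and separable when $B,C$ are; the evaluation maps and the inclusions are $R^*$-algebra $*$-homomorphisms; and homotopies of $*$-homomorphisms are the usual paths through $C([0,1], -)$. None of these steps uses complex scalars in an essential way, so the real case "goes through the same." The one point deserving a sentence of care is that the identification $SA = C_0(\R,A)$ and the exactness of the suspension functor on $R^*$-algebras — both already noted in the text preceding Corollary~\ref{period} — are what make the "one stage up" argument legitimate. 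Accordingly, I would phrase the proof as: reduce to the mapping-cone exact sequence, prove $\iota_*$ is an isomorphism by the homotopy-invariance/half-exactness diagram chase exactly as in Theorem~21.4.3 of \cite{blackadarbook} (noting the argument is field-independent), define $\partial$ as the composite $F(SC) \to F(C_g) \xrightarrow{\iota_*^{-1}} F(A)$, and check naturality of $\partial$ by functoriality of the mapping cone construction.
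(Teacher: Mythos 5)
Your proposal is correct and is essentially the paper's own proof: the paper simply says to use the mapping cone construction as in Section~21.4 of \cite{blackadarbook}, which is exactly the argument you spell out (the cone $C_g$, the sequence $0 \to SC \to C_g \to B \to 0$, the key fact that $\iota_* \colon F(A) \to F(C_g)$ is an isomorphism via the cylinder comparison, and the definition $\partial = \iota_*^{-1}\circ$ the cone map), together with the routine check that the constructions carry over verbatim to $R^*$-algebras.
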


\begin{proof}
Use the mapping cone construction as in Section~21.4 of \cite{blackadarbook}.
\end{proof}

\begin{corol} \label{splitexact}
A functor $F$ from {\catr} to {\catabgp} that is homotopy invariant and half exact is also split exact.
\end{corol}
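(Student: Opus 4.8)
\emph{Proof proposal.} The plan is to feed a split exact sequence into the long exact sequence of Proposition~\ref{les1} and argue that the connecting map vanishes. So let
$$0 \rightarrow A \xrightarrow{f} B \xrightarrow{g} C \rightarrow 0$$
be a split exact sequence of separable $R^{*}$-algebras, with $s \colon C \rightarrow B$ a $*$-homomorphism satisfying $g \circ s = \id_C$. Since $F$ is homotopy invariant and half exact, Proposition~\ref{les1} supplies the exact sequence
$$\dots \rightarrow F(SB) \xrightarrow{g_*} F(SC) \xrightarrow{\partial} F(A) \xrightarrow{f_*} F(B) \xrightarrow{g_*} F(C) \; .$$
Functoriality of $F$ applied to $g \circ s = \id_C$ gives $g_* \circ s_* = \id_{F(C)}$, so $g_* \colon F(B) \rightarrow F(C)$ is surjective and $s_*$ is a section for it; this already yields exactness at $F(C)$ together with a splitting.

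First I would observe that the suspension functor $S$ carries the given split exact sequence to the split exact sequence
$$0 \rightarrow SA \rightarrow SB \xrightarrow{Sg} SC \rightarrow 0$$
with section $Ss$ (applying $C_0(\R,-)$ is functorial and preserves the identity $Sg \circ Ss = \id_{SC}$). Consequently, by the same reasoning as above applied to $Sg$, the map $(Sg)_* \colon F(SB) \rightarrow F(SC)$ is surjective. Now invoke exactness of the long exact sequence at $F(SC)$: $\ker \partial = \im\big((Sg)_*\big) = F(SC)$, hence $\partial = 0$.

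Next, exactness at $F(A)$ gives $\ker f_* = \im \partial = 0$, so $f_* \colon F(A) \rightarrow F(B)$ is injective; together with exactness at $F(B)$ (which is part of Proposition~\ref{les1}) and the surjectivity and splitting of $g_*$ established above, this shows that
$$0 \rightarrow F(A) \xrightarrow{f_*} F(B) \xrightarrow{g_*} F(C) \rightarrow 0$$
is a split exact sequence of abelian groups. Hence $F$ is split exact. I do not anticipate any genuine obstacle here: the only point requiring a moment's care is that the suspension of a split exact sequence remains split exact, and that is immediate from the functoriality of $S$; everything else is a diagram chase in the sequence of Proposition~\ref{les1}.
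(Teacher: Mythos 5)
Your proof is correct and follows the same route as the paper: use the long exact sequence of Proposition~\ref{les1}, note that the splitting (suspended) makes the map into $F(SC)$ surjective so that $\partial = 0$ and $f_*$ is injective, and that $s_*$ splits the resulting short exact sequence. The only difference is that you spell out explicitly that the suspension of a split exact sequence is split exact, a point the paper leaves implicit.
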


\begin{proof}
The splitting implies that $g_*$ is surjective.  Thus in the sequence of Proposition~\ref{les1}, $\partial = 0$ and $f_*$ is injective.
\end{proof}

\begin{propo}
Let $F$ be a functor from {\catr} to {\catabgp} that is homotopy invariant, stable,  and half exact.  Then for any short exact sequence 
$$0 \rightarrow A \xrightarrow{f} B \xrightarrow{g} C \rightarrow 0$$ 
there is a natural long exact sequence (with 24 distinct terms)
$$\dots \rightarrow F_{n+1}(C) \xrightarrow{\partial} F_n(A) \xrightarrow{f_*} F_n(B) \xrightarrow{g_*} F_n(C) \xrightarrow{\partial} F_{n-1}(A) \rightarrow \dots \; .$$
\end{propo}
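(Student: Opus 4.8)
The plan is to build the two-sided sequence by splicing together suspended copies of the half-infinite long exact sequence of Proposition~\ref{les1}, and then to fold the result up using real Bott periodicity. First I would assemble the tools. By Corollary~\ref{splitexact} the functor $F$ is split exact, so Corollary~\ref{thm:univprop2} applies and $F$ factors through the category $\catkk$; in particular $F$ sends $\kk$-equivalences to isomorphisms. Combined with the $\kk$-equivalences $S^{n}S^{m}D \simeq S^{n+m}D$, valid for all $n,m \in \Z$ and all separable $R^{*}$-algebras $D$ (which follow, as noted above, from $SS^{-1}\R \simeq \R$), this yields natural isomorphisms $F(S^{n}S^{m}D)\cong F_{n+m}(D)$. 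Thus applying a suspension to a short exact sequence shifts the index of the associated $F_*$-sequence, and in particular the boundary maps of Proposition~\ref{les1} may be regarded as maps $\partial\colon F_{n+1}(C)\to F_n(A)$.

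Next, fix $k \ge 0$. Since $S$ and $S^{-1}$ are exact, $0 \to S^{-k}A \xrightarrow{f} S^{-k}B \xrightarrow{g} S^{-k}C \to 0$ is a short exact sequence of separable $R^{*}$-algebras, so Proposition~\ref{les1}, read through the identifications above, gives a half-infinite long exact sequence
$$\cdots \to F_{m+1}(C)\xrightarrow{\partial}F_m(A)\xrightarrow{f_*}F_m(B)\xrightarrow{g_*}F_m(C)\to\cdots\to F_{-k}(A)\xrightarrow{f_*}F_{-k}(B)\xrightarrow{g_*}F_{-k}(C)$$
which is exact at every term $F_m(A)$ and $F_m(B)$ with $m \ge -k$ and at every $F_m(C)$ with $m \ge -k+1$. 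Using the naturality of the boundary map of Proposition~\ref{les1} with respect to suspension, one checks that for $k' \ge k$ the maps $f_*$, $g_*$, $\partial$ produced by the $k'$-th sequence agree, on the terms indexed by $m \ge -k$, with those produced by the $k$-th sequence. Letting $k\to\infty$ therefore produces a single two-sided long exact sequence
$$\cdots \to F_{n+1}(C)\xrightarrow{\partial}F_n(A)\xrightarrow{f_*}F_n(B)\xrightarrow{g_*}F_n(C)\xrightarrow{\partial}F_{n-1}(A)\to\cdots,\qquad n\in\Z.$$

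Finally, Corollary~\ref{period} supplies a natural isomorphism $F(S^{8}D)\cong F(D)$, coming from the $\kk$-equivalence $S^{8}\R \simeq \R$ (equivalently, multiplication by a generator of $K_8(\R)\cong\Z$ in the $K_*(\R)$-module structure on $F_*$). Hence the two-sided sequence is invariant under the index shift $n\mapsto n+8$, so the only distinct terms are $F_n(A)$, $F_n(B)$, $F_n(C)$ for $0\le n\le 7$ — twenty-four in all, as asserted.

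I expect the main obstacle to be not any single deep step but the coherence bookkeeping behind the splicing: one must verify that the natural isomorphisms $F(S^{n}S^{-k}D)\cong F_{n-k}(D)$ can be chosen compatibly as $k$ varies, and that under them the connecting homomorphisms coming from Proposition~\ref{les1} for the different suspended sequences genuinely coincide on the overlapping indices, so that the union over $k$ is a bona fide long exact sequence rather than a family of sequences glued along a priori different isomorphisms. This comes down to pinning a coherent choice of the $\kk$-equivalences $SS^{-1}\R\simeq\R$ and invoking the naturality of the mapping-cone construction underlying Proposition~\ref{les1}; once that is settled, the periodicity fold-up of the last step — which is natural in the short exact sequence — is routine.
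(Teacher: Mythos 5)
Your argument is correct, but it takes a somewhat different route from the paper's. The paper's entire proof is a two-line appeal to periodicity: by Corollary~\ref{splitexact} the functor is split exact, so Corollary~\ref{period} applies and $F$ is periodic ($F_n \cong F_{n+8}$ naturally, and $F(S^{-1}SA)\cong F(A)$); the half-infinite sequence of Proposition~\ref{les1}, applied to the positive suspensions of the given extension, then wraps up into the two-sided $24$-term-periodic sequence, because the periodicity isomorphism is multiplication by an invertible $\kk$-class via the pairing of Theorem~\ref{thm:univprop1} and hence commutes with $f_*$, $g_*$ and $\partial$. You instead extend to negative degrees by desuspending the extension itself, applying Proposition~\ref{les1} to $0 \to S^{-k}A \to S^{-k}B \to S^{-k}C \to 0$ and splicing, using periodicity only at the end to count the $24$ distinct terms. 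What the paper's route buys: it avoids exactly the bookkeeping you flag as the main obstacle, namely the exactness of $S^{-1}$ on extensions and the coherence, as $k$ varies, of the identifications $F(S^nS^{-k}D)\cong F_{n-k}(D)$; all it needs is naturality of the periodicity isomorphism. What your route buys: you never have to check that $\partial$ is compatible with the periodicity isomorphism, since you only ever shift the input extension; and the exactness of $S^{-1}$ that you assert is indeed a routine fixed-point argument (complexify, use exactness of $C_0(\R,-)$, and average a lift with the involution), so your splicing argument goes through --- it is simply longer than the paper's appeal to periodicity.
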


\begin{proof}
From Corollary~\ref{splitexact} and Corollary~\ref{period}, $F$ is periodic; so Proposition~\ref{les1} gives the long exact sequence. 
\end{proof}

We say that a homotopy invariant, stable, half-exact functor $F$ from {\catr} to the category {\catabgp} of abelian groups 
\begin{enumerate}
\item[(v)] satisfies the {\it dimension axiom} if there is an isomorphism $F_*(\R) \cong K_*(\R)$ as graded modules over $K_*(\R)$
\item[(vi)] is {\it continuous} if for any direct sequence of $R^{*}$-algebras $(A_n, \phi_n)$, the natural homomorphism
$$\lim_{n \to \infty} F_*(A_n) \rightarrow F_*(\lim_{n \to \infty} (A_n)) \; $$
is an isomorphism.
\end{enumerate}

\begin{theor}
Let $F$ be a functor from {\catr} to {\catabgp} that is homotopy invariant, stable, half exact and satisfies the dimension axiom.  Then there is a natural transformation $\beta \colon K_n(A) \rightarrow F_n(A)$.  If $F$ is also continuous, then $\beta$ is an isomorphism for all $R^{*}$-algebras in the smallest class of separable $R^{*}$-algebras which contains $\R$ and is closed under $\kk$-equivalence, countable inductive limits, and the two-out-of-three rule for exact sequences.
\end{theor}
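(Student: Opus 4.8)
The plan is to manufacture $\beta$ out of the universal pairing of Theorem~\ref{thm:univprop1} and then prove it is an isomorphism by an induction over the generators of the class $\mathcal{C}$. Since $F$ is homotopy invariant, stable and half exact, Corollary~\ref{splitexact} shows it is split exact, so the graded pairing of Theorem~\ref{thm:univprop1} is available: a natural map $\alpha\colon F_*(A)\otimes\kk_*(A,B)\to F_*(B)$, associative for the Kasparov product and with $\alpha(z\otimes[f])=f_*(z)$ for $*$-homomorphisms $f$ (and more generally $\alpha(z\otimes\Phi)=\Phi_*(z)$ for $\Phi\in\kk(A,B)$, by Corollary~\ref{thm:univprop2}). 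Using the identification $K_n(A)\cong\kk_n(\R,A)$ --- the real counterpart of $K_0(\C)\cong\kk(\C,-)$ --- and letting $\xi\in F_0(\R)$ be the element corresponding to $1\in K_0(\R)$ under the dimension-axiom isomorphism $F_*(\R)\cong K_*(\R)$, I would set
\[\beta_A\colon K_n(A)=\kk_n(\R,A)\longrightarrow F_n(A),\qquad \beta_A(y)=\alpha(\xi\otimes y).\]
Naturality of $\beta_A$ in $A$ --- with respect to all $\kk$-morphisms, not merely $*$-homomorphisms --- is immediate from the naturality of $\alpha$, the identity $\alpha(z\otimes\Phi)=\Phi_*(z)$, and the associativity $\alpha(\xi\otimes(y\otimes_A\Phi))=\alpha(\alpha(\xi\otimes y)\otimes\Phi)$; thus $\beta$ is a natural transformation of the two functors $\widehat{K_n},\widehat{F_n}\colon\catkk\to\catabgp$ provided by Corollary~\ref{thm:univprop2} (for $K$-theory one may instead note directly that $\kk_n(\R,-)$ is a $\catkk$-functor via composition). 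This already gives the first assertion, which does not require continuity.

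For $A=\R$ the dimension axiom makes $F_*(\R)$ a free graded $K_*(\R)$-module of rank one, and since the module action of Corollary~\ref{period} on $F_*(\R)$ is $\xi\cdot y=\alpha(\xi\otimes\beta'(y))$ with $\beta'$ the identity of $\kk_*(\R,\R)$, we get $\beta_\R(y)=\xi\cdot y$, which is exactly the module isomorphism $K_*(\R)\xrightarrow{\ \sim\ }F_*(\R)$ sending $1\mapsto\xi$; so $\beta_\R$ is an isomorphism in every degree. Now let $\mathcal{D}$ be the class of separable $R^*$-algebras $A$ for which $\beta_A$ is an isomorphism in all degrees. Then $\R\in\mathcal{D}$; because $\beta$ is a natural transformation of $\catkk$-functors and $K_n,F_n$ invert $\kk$-equivalences, $\mathcal{D}$ is closed under $\kk$-equivalence; and if $F$ is continuous then, using the continuity of $K$-theory, for $A=\varinjlim A_k$ with $A_k\in\mathcal{D}$ the naturality square relating $\varinjlim K_n(A_k)\to K_n(A)$ and $\varinjlim F_n(A_k)\to F_n(A)$ has isomorphisms on three sides, so $\beta_A$ is an isomorphism and $\mathcal{D}$ is closed under countable inductive limits. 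Finally, given $0\to A\to B\to C\to 0$ with two of the three algebras in $\mathcal{D}$, the periodic long exact sequences for $F$ and for $K$-theory, together with the map between them induced by $\beta$, place the third algebra in $\mathcal{D}$ by the five lemma. By minimality of $\mathcal{C}$, one concludes $\mathcal{C}\subseteq\mathcal{D}$.

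The one genuinely delicate point is this last step: one must check that $\beta$ intertwines the connecting homomorphisms $\partial$ of the two long exact sequences, since everything else is formal consequence of naturality of the pairing. I would settle this by examining how $\partial$ is produced: for an extension $0\to A\to B\xrightarrow{g}C\to 0$, the boundary map of Proposition~\ref{les1} factors through the mapping cone $C_g$ --- the inclusion $A\hookrightarrow C_g$ induces an isomorphism on any homotopy-invariant, half-exact functor (Section~21.4 of \cite{blackadarbook}), and $\partial$ is the composite of the map induced by $SC\hookrightarrow C_g$ with the inverse of that isomorphism. Since the $K$-theory boundary map arises from the identical construction and $\beta$ is natural for the $*$-homomorphisms $A\hookrightarrow C_g$ and $SC\hookrightarrow C_g$, the relevant squares commute. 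Equivalently, and perhaps more cleanly, one may record that $\partial$ is the value of $\widehat{F}$ (resp.\ $\widehat{K}$) on a bona fide morphism of the graded category $\catkk$, so that its compatibility with the natural transformation $\beta$ requires nothing further.
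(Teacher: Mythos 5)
Your proposal is correct and follows essentially the same route as the paper: define $\beta(x)=\alpha(z\otimes x)$ via the universal pairing of Theorem~\ref{thm:univprop1} with $z$ the generator supplied by the dimension axiom, check it is an isomorphism for $A=\R$, and then bootstrap through $\kk$-equivalence, inductive limits, and two-out-of-three. The paper compresses the last step into the phrase ``bootstrapping arguments,'' so your explicit treatment of naturality over $\catkk$ and of compatibility with the boundary maps (via the mapping cone, or equivalently by viewing $\partial$ as the image of a $\kk$-morphism) simply supplies details the paper omits.
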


\begin{proof}
Let $z$ be a generator of $F(\R) \cong \Z$ and for $x \in K_n(A) \cong \kk(\R, S^nA)$ define 
a $K_*(\R)$-module homomorphism $\beta \colon K_*(A) \rightarrow F_*(A)$ by $\beta(x) = \alpha(z \otimes x)$.  Taking $A = \R$, Theorem~\ref{thm:univprop1} yields that $\beta(1_0) = z$ where $1_0$ is the unit of the ring $K_*(\R) = \kk_*(\R, \R)$.  Therefore, $\beta$ is an isomorphism for $A = \R$.  Then bootstrapping arguments show that $\beta$ is an isomorphism for all $R^{*}$-algebras in the class described.
\end{proof}

For any homotopy invariant, stable, split exact functor $F$ on {\catr}, define the united $F$-theory of an $R^{*}$-algebra $A$ to be
$$F\crt(A) = \{ F_*(A), F_*(\C \otimes A), F_*(T \otimes A) \} \; $$
with the module-structure given by multiplication by elements of $KK_*(X,Y)$ with $X,Y \in \{\R, \C, T\}$ via the 
pairing of Theorem~\ref{thm:univprop1}.

\begin{propo}\label{thm:crtmod}
Let $F$ be a homotopy invariant, stable, split exact functor from {\catr} to {\catabgp} and let $A$ be a separable $R^{*}$-algebra.  Then $F\crt(A)$
is a \ct-module. Moreover, if in addition $F$ is half exact, then $F\crt ( A )$ is acyclic.
\end{propo}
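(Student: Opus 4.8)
The plan is to transport to $F\crt(A)$, via the universal pairing $\alpha$ of Theorem~\ref{thm:univprop1}, the algebraic relations and exact sequences already known to hold in $\kk$-theory by \cite{boersema02} and \cite{boersema04}. For the first assertion, recall that a $\ct$-module is the data of three graded abelian groups equipped with the eight operations $c,r,\varepsilon,\zeta,\psi\su,\psi\st,\gamma,\tau$ together with the Bott-type periodicity operators on each part, subject to a fixed finite list of universal identities. For $F\crt(A)$ the three groups are $F_*(A)$, $F_*(\C\otimes A)$ and $F_*(T\otimes A)$; the graded pairing extending Theorem~\ref{thm:univprop1} (noted after Corollary~\ref{period}) makes $F_*(B)$ a graded module over $\kk_*(B,B)$ for every separable $B$, and restricting along the ring maps $\kk_*(\R,\R)\to\kk_*(A,A)$, $\kk_*(\C,\C)\to\kk_*(\C\otimes A,\C\otimes A)$ and $\kk_*(T,T)\to\kk_*(T\otimes A,T\otimes A)$ given by $-\otimes 1_A$ supplies the requisite module structures and periodicity operators on the three parts. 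By construction every operation of $F\crt(A)$ has the form $x\mapsto\alpha\bigl(x\otimes(w\otimes 1_A)\bigr)$ for the corresponding distinguished element $w$ in a $\kk$-group of the fixed algebras $\R,\C,T$. Since $\alpha$ is biadditive, normalized by $\alpha(x\otimes 1_A)=x$, and compatible with the Kasparov product in the sense $\alpha(\alpha(x\otimes y)\otimes z)=\alpha(x\otimes(y\otimes z))$ (Theorem~\ref{thm:univprop1}), and since $-\otimes 1_A$ is a ring homomorphism on $\kk$, every integer combination of composites of these operations is again of the form $x\mapsto\alpha\bigl(x\otimes(w'\otimes 1_A)\bigr)$ for the matching combination of Kasparov products $w'$. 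Hence it is enough that the defining identities of a $\ct$-module hold among the elements $c\otimes 1_A,\ r\otimes 1_A,\ \dots$ in the $\kk$-groups of $\R\otimes A,\ \C\otimes A,\ T\otimes A$; applying $-\otimes 1_A$ reduces this to the corresponding identities among $c,r,\varepsilon,\zeta,\psi\su,\psi\st,\gamma,\tau$ in the $\kk$-groups of $\R,\C,T$, which are exactly those recorded in Section~2.1 of \cite{boersema04}. This proves that $F\crt(A)$ is a $\ct$-module.

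For the second assertion, assume in addition that $F$ is half exact. Then $F$ is split exact by Corollary~\ref{splitexact}, so the first part applies, and by Proposition~\ref{les1} combined with the periodicity of Corollary~\ref{period} the functor $F$ has a natural two-sided periodic long exact sequence attached to every short exact sequence of separable $R^*$-algebras. Acyclicity of $F\crt(A)$ is the assertion that a fixed finite family of natural sequences built out of the $\ct$-operations is exact. Each such sequence arises by applying $F_*$ to a short exact sequence $0\to D\otimes A\to E\otimes A\to G\otimes A\to 0$, where $0\to D\to E\to G\to 0$ runs over a fixed finite list of short exact sequences among the algebras $\R,\C,T$ and their suspensions and mapping cones; tensoring with $A$ keeps the sequence exact because the relevant $D,E,G$ are nuclear, so the surjections admit completely positive linear splittings. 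Feeding each into the periodic long exact sequence of $F$ and identifying the resulting maps with the $\ct$-operations, up to sign and degree shift, gives the required exact sequences: maps induced by $*$-homomorphisms are identified using $f_*(1_A)=[f]$ and naturality of $\alpha$ as in Corollary~\ref{thm:univprop2}, while boundary maps are identified using naturality of $\partial$ from Proposition~\ref{les1} together with the explicit realizations of $c,r,\varepsilon,\zeta,\gamma,\tau$ as classes of $*$-homomorphisms or of extensions given in \cite{boersema04}. These are exactly the identifications made for united $K$-theory in \cite{boersema02} and \cite{boersema04}, so no new $\kk$-computation is needed.

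The conceptual input of both parts is just the compatibility of the universal pairing with Kasparov products (for the relations) and with boundary maps (for the acyclic sequences). The main obstacle is bookkeeping in the second part: pinning down the correct finite list of short exact sequences of fixed $R^*$-algebras whose $F$-long-exact-sequences realize the $\ct$-acyclicity conditions, verifying that exactness survives $-\otimes A$, and matching each connecting homomorphism with the right operation and the right degree shift relative to the $8$-, $2$- and $4$-fold periodicities of the $O$-, $U$- and $T$-parts. A secondary point needing care is that Proposition~\ref{les1} yields only a half-infinite sequence, so one must invoke split exactness (Corollary~\ref{splitexact}) and periodicity (Corollary~\ref{period}) to splice it into the two-sided periodic long exact sequences underlying the $\ct$-module formalism.
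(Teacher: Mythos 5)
Your proposal is correct and takes essentially the same route as the paper: the \ct-module relations are checked at the level of $\kk$-elements (Proposition~2.4 of \cite{boersema04}) and transported to $F\crt(A)$ via the associativity of the pairing of Theorem~\ref{thm:univprop1}, while acyclicity is deduced by feeding the three short exact sequences of \cite{boersema02} (tensored with $A$) into the long exact sequence of $F$ and identifying all the maps, boundary maps included, with multiplication by the distinguished $\kk$-elements. The only difference is bookkeeping: the paper pins down the $\kk$-element identities $[f]=\eta\so$, $[g]=c$, $\partial = r\beta\su^{-1}$ by specializing the functor to $\kk(B,-)$ and quoting \cite{boersema04}, which is the same identification you invoke via the known realizations in united $K$-theory.
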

        
\begin{proof}
The first statement follows immediately and the second statement follows from the exact sequences
$$\dots  \rightarrow F_n(A) \xrightarrow{\eta\so} F_{n+1}(A) \xrightarrow{c} 
F_{n+1}(\C \otimes A) \xrightarrow{r \beta\su^{-1}} F_{n-1}(A) \rightarrow \dots 
$$
$$
\dots  \rightarrow F_n(A) \xrightarrow{\eta\so^2} F_{n+2}(A) \xrightarrow{\ve} 
F_{n+2}(T \otimes A) \xrightarrow{\tau \beta\st^{-1}} F_{n-1}(A) \rightarrow \dots 
$$
$$
\dots  \rightarrow F_{n+1}(\C \otimes A) \xrightarrow{\gamma} F_{n}(T \otimes A) \xrightarrow{\zeta}
F_{n}(\C \otimes A) \xrightarrow{1 - \psi\su} F_{n}(\C \otimes A) \rightarrow \dots  \; 
$$
which arise from the short exact sequences
$$0 \rightarrow S^{-1} \R \otimes A \rightarrow \R \otimes A \rightarrow \C \otimes A \rightarrow 0  
$$
$$0 \rightarrow S^{-2} \R \otimes A \rightarrow \R \otimes A \rightarrow T \otimes A \rightarrow 0  
$$
$$0 \rightarrow S \C \otimes A \rightarrow T \otimes A \rightarrow \C \otimes A \rightarrow 0  
$$
as in Sections~1.2 and 1.4 of \cite{boersema02}.  
\end{proof}

Finally, we obtain the following reduction theorem which is a formalization of a common argument used to reduce results from the complex case to the real case such as those found in \cite{BaumKaroubi04}, \cite{boersema02}, and \cite{boersema04}.

\begin{theor} \label{main}
Let $F$ and $G$ be
homotopy invariant, stable, half exact functors from {\catr} to {\catabgp} with a natural transformation $\mu_A \colon F(A) \rightarrow G(A)$.  If $\mu_A$ is an isomorphism for all $C^{*}$-algebras $A$ in {\catr}, then $\mu_A$ is an isomorphism for all $R^{*}$-algebras in {\catr}.
\end{theor}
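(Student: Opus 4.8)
\begin{skproof}
The plan is to route everything through the universal property and the \ct-module machinery developed above. Since $F$ and $G$ are homotopy invariant and half exact, Corollary~\ref{splitexact} shows they are split exact, so Theorem~\ref{thm:univprop1} equips both with universal pairings over $\kk$, and Proposition~\ref{thm:crtmod} shows that $F\crt(A)$ and $G\crt(A)$ are acyclic \ct-modules for every separable $R^{*}$-algebra $A$; in particular the exact sequences \eqref{CRTequation1}--\eqref{CRTequation3} hold for them. The first thing to pin down is that $\mu$ respects all of this structure: by naturality of $\mu$ together with the universal property (Theorem~\ref{thm:univprop1} and Corollary~\ref{thm:univprop2}), $\mu$ extends to a natural transformation between the induced functors on {\catkk}, hence commutes with the action of every element of $\kk$, and in particular with each of the \ct-operations $c,r,\ve,\zeta,\psi\su,\psi\st,\gamma,\tau$ (as well as with $\eta\so$ and $\beta\su^{-1}$). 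Thus for every $A$ we get a morphism of acyclic \ct-modules $\mu\crt_A\colon F\crt(A)\to G\crt(A)$, and the commuting ladders needed in the diagram chases below are available.

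The key reduction is that the hypothesis applies verbatim to the complex part. For any separable $R^{*}$-algebra $A$, the complexification $\C\otimes A\cong A\suc$ and all of its suspensions $S^{n}(\C\otimes A)$, $n\in\Z$, are separable complex $C^{*}$-algebras; by hypothesis $\mu$ is an isomorphism on each of them, i.e.\ $\mu$ is an isomorphism on the $U$-part of $\mu\crt_A$. (Using the short exact sequence $0\to S\C\otimes A\to T\otimes A\to \C\otimes A\to 0$ of Proposition~\ref{thm:crtmod} and the five lemma one also sees that $\mu$ is an isomorphism on the $T$-part, so $\mu\crt_A$ is a \ct-module isomorphism; but only the $U$-part is needed below.) It remains to bootstrap to the $O$-part, i.e.\ to show that $\mu_A\colon F_{n}(A)\to G_{n}(A)$ is an isomorphism for all $n$.

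For this I would use only the $\eta$-Bockstein sequence \eqref{CRTequation1} for $F\crt(A)$ and $G\crt(A)$, together with the fact that $\eta\so$ is nilpotent as an operation on the $O$-part of any \ct-module (it is multiplication by a power of the nilpotent element $\eta\in K_{*}(\R)$). For injectivity: if $x\in F_{n}(A)$ and $\mu_A(x)=0$, then $\mu(c(x))=c(\mu_A(x))=0$, so $c(x)=0$ since $\mu$ is injective on the $U$-part, and by exactness of \eqref{CRTequation1} we may write $x=\eta\so(y)$. Then $\mu(\eta\so y)=0$, so $\mu(y)\in\ker\eta\so=\operatorname{im}(r\beta\su^{-1})$ for $G$, say $\mu(y)=r\beta\su^{-1}(w)$ with $w$ in the $U$-part of $G\crt(A)$; pulling $w$ back through the isomorphism on the $U$-part, $w=\mu(w')$, and $y':=y-r\beta\su^{-1}(w')$ satisfies $\mu(y')=0$. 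Since consecutive maps in \eqref{CRTequation1} compose to zero, $x=\eta\so(y)=\eta\so(y')$. Iterating and using the nilpotence of $\eta\so$ gives $x=0$. Surjectivity is the dual chase, now using the injectivity just established: given $w\in G_{n}(A)$, write $c(w)=\mu(v)$ with $v$ in the $U$-part; then $\mu(r\beta\su^{-1}v)=r\beta\su^{-1}(c(w))=0$, so $r\beta\su^{-1}(v)=0$ by injectivity of $\mu$ on the $O$-part, whence $v=c(u)$ for some $u\in F_{n}(A)$; then $c(w-\mu(u))=0$ forces $w-\mu(u)=\eta\so(w_{1})$, and applying the same step to $w_{1}$, then to the resulting $w_{2}$, and so on, and using the nilpotence of $\eta\so$, exhibits $w$ as $\mu_A$ of an element of $F_{n}(A)$. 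Hence $\mu_A$ is an isomorphism for every separable $R^{*}$-algebra $A$.

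The step I expect to require the most care is the compatibility asserted in the first paragraph --- making precise that a natural transformation between homotopy invariant, stable, split exact functors automatically commutes with the $\kk$-action (hence with the \ct-operations), so that the ladders in the diagram chases are legitimate; this is where one must return to the construction of the universal pairing. Granting that, and the (standard) nilpotence of $\eta\so$ on \ct-modules, the remainder is the elementary chase above, whose only conceptual input is that complexification never leaves the category of complex $C^{*}$-algebras, so the hypothesis feeds directly into \eqref{CRTequation1}.
\end{skproof}
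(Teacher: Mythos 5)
Your proposal is correct, and its overall strategy coincides with the paper's: both pass from $\mu_A$ to the induced morphism $\mu_A\crt \colon F\crt(A) \to G\crt(A)$ of acyclic \ct-modules via Proposition~\ref{thm:crtmod}, and both observe that the hypothesis gives an isomorphism on the complex part because $\C \otimes A$ (and its suspensions) are $C^{*}$-algebras. Where you differ is the last step: the paper simply cites Section~2.3 of \cite{bousfield90} for the fact that a morphism of acyclic \ct-modules which is an isomorphism on the complex part is an isomorphism, whereas you reprove exactly the special case needed by a direct chase in the Bockstein sequence \eqref{CRTequation1}, using only the nilpotence of $\eta\so$ (which follows from Corollary~\ref{period}, since $F_*(A)$ is a $K_*(\R)$-module and $\eta^3 = 0$ in $K_*(\R)$ --- note $\eta\so$ is multiplication by $\eta$ itself, not by a power of it, but nilpotence is all you use). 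Your version is more self-contained and in fact needs less than full acyclicity: only the $O$--$U$ sequence enters, and the $T$-part plays no role; the paper's citation buys brevity and the stronger conclusion that $\mu_A\crt$ is a \ct-isomorphism. The point you rightly flag as delicate --- that a natural transformation between homotopy invariant, stable, split exact functors automatically intertwines the $\kk$-pairings, hence the \ct-operations, so the ladders commute --- is not a formal consequence of the uniqueness clause of Theorem~\ref{thm:univprop1} alone (which concerns a single functor); one must check it from Higson's construction, where $\Phi_*$ is a composite of maps $F(f)$ and inverses of such maps, with which any natural transformation commutes. The paper glosses over this same point when it asserts that $\mu_A$ "induces" a morphism of \ct-modules, so your treatment is at least as careful as the original.
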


\begin{proof}
Let $A$ be a separable $R^{*}$-algebra.  The natural transformation $\mu_A$ induces a homomorphism
$\mu_A\crt \colon F\crt(A) \rightarrow G\crt(A)$ 
of acyclic \ct-modules which is, by hypothesis, an isomorphism on the complex part.  Then the results in Section~2.3 of \cite{bousfield90} imply that $\mu_A\crt$ is an isomorphism.
\end{proof}

\begin{corol}
Any homotopy invariant, stable, half exact functor from {\catr} to {\catabgp} that vanishes on all $C^{*}$-algebras vanishes on all $R^{*}$-algebras.
\end{corol}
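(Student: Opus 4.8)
The statement is an immediate consequence of Theorem~\ref{main}, and the plan is simply to apply that theorem with the target functor taken to be the zero functor. First I would observe that the constant functor $G \colon \catr \rightarrow \catabgp$ given by $G(A) = 0$ for every separable $R^{*}$-algebra $A$ is trivially homotopy invariant, stable (the identity $0 \rightarrow 0$ is an isomorphism), and half exact (the sequence $0 \rightarrow 0 \rightarrow 0$ is exact). The zero maps $\mu_A \colon F(A) \rightarrow G(A) = 0$ then assemble into a natural transformation between homotopy invariant, stable, half exact functors from $\catr$ to $\catabgp$.

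Next I would verify the hypothesis of Theorem~\ref{main}: for every $C^{*}$-algebra $A$ in $\catr$ we have $F(A) = 0$ by assumption and $G(A) = 0$ by construction, so $\mu_A \colon 0 \rightarrow 0$ is (trivially) an isomorphism on the subclass of $C^{*}$-algebras. Theorem~\ref{main} then yields that $\mu_A \colon F(A) \rightarrow 0$ is an isomorphism for every $R^{*}$-algebra $A$ in $\catr$, which is precisely the assertion that $F(A) = 0$ for all such $A$.

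There is no real obstacle here; the entire content lies in Theorem~\ref{main}. If one prefers not to invoke it as a black box, the argument can be run directly: by Proposition~\ref{thm:crtmod}, $F\crt(A)$ is an acyclic \ct-module, and by hypothesis its complex part $F_*(\C \otimes A)$ vanishes (since $\C \otimes A$ is a $C^{*}$-algebra in $\catr$); the results of Section~2.3 of \cite{bousfield90} on acyclic \ct-modules then force $F\crt(A) = 0$, and in particular $F_*(A) = 0$, for every separable $R^{*}$-algebra $A$.
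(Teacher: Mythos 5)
Your proposal is correct and matches the paper's intended argument: the corollary is stated as an immediate consequence of Theorem~\ref{main}, obtained exactly as you do by taking $G$ to be the zero functor (or, equivalently, by running the acyclic \ct-module argument from Proposition~\ref{thm:crtmod} and \cite{bousfield90} directly). No gaps.
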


\section{Asymptotic Morphisms and $E$-theory} \label{E-theory}
       
Since much of the theory of asymptotic morphisms and $E$-theory carries through in the real case exactly as in the complex case, we highlight the main definitions and results without proof. The goal of this section is to show that $\kk (A, B )$ is naturally isomorphic to $E(A,B)$ for separable $R^{*}$-algebras $A$ and $B$, when $A$ is nuclear. At the end, we take advantage of Theorem \ref{main} to obtain this theorem saving us needing to check that all of the details in the complex case carry through to the real case. We note that asymptotic morphisms for $R^*$-algebras were also discussed in Section~8 of \cite{BRS}. 
       
\begin{defin}\label{d:asymorp}
Let $A$ and $B$ be $R^{*}$-algebras.  An \emph{asymptotic morphism} from $A$ to $B$ is a family $\asy{ \phi_{t} }$  (for $ t \in [1,\infty) )$ of maps from $A$ to $B$ with the following properties:
\begin{enumerate}
\item for all $a \in A$, $t \mapsto \phi_{t} ( a )$ is bounded and continuous; and

\item The set $\asy{ \phi_{t} }$ is asymptotically $*$-linear and multiplicative, i.e.\
\begin{enumerate}
\item $\displaystyle \lim_{ t \to \infty } \norm{ \phi_{t} ( \lambda a + b ) - ( \lambda \phi_{t} ( a ) + \phi_{t} ( b ) ) } = 0$;

\item $\displaystyle \lim_{ t \to \infty } \norm{ \phi_{t} ( a^{*} )  -\phi_{t} ( a )^{*} } = 0$; and 

\item $\displaystyle \lim_{ t \to \infty } \norm{ \phi_{t} ( a b ) - \phi_{t} ( a ) \phi_{t} ( b ) } = 0$
\end{enumerate}
for all $a, b \in A$ and $\lambda \in \R$.
\end{enumerate}

Two asymptotic morphisms $\ftn{ \asy{ \phi_{t} }, \asy{ \psi_{t} } }{ A }{ B  }$ are said to be \emph{equivalent} if 
\begin{align*}
\lim_{ t \to \infty } \left( \phi_{t} ( a ) - \psi_{t} ( a ) \right) = 0
\end{align*} 
for all $a \in A$; and they are said to be \emph{homotopic} if there exists an asymptotic morphism $\ftn{ \asy{ \Phi_{t} } }{ A }{ IB }$ such that for each $t \in [1, \infty )$ and $a \in A$, 
\begin{align*}
\mathrm{ev}_{0} ( \Phi_{t} (a) ) = \phi_{t} ( a )  \quad \text{and} \quad 
\mathrm{ev}_{1} ( \Phi_{t} (a) ) = \psi_{t} ( a ). 
\end{align*}
where $\mathrm{ev}_i \colon IB \rightarrow B$ are the evaluation maps for $i = 0,1$.
\end{defin}        

As in the complex case, equivalent asymptotic morphisms are always homotopic (Section~25.1.2 (g) of \cite{blackadarbook}). If $\phi$ is a $*$-homomorphism from $A$ to $B$ then $\asy{ \phi }$ denotes the corresponding constant asymptotic morphism. Also, as in Remark~25.1.4 (a) of \cite{blackadarbook}, there is a one-to-one correspondence between equivalence classes of asymptotic morphisms from $A$ to $B$; and $*$-homomorphisms from $A$ to $B_\infty = C_b([1, \infty), B)/C_0([1, \infty), B)$.

\begin{defin}
For $R^{*}$-algebras $A$ and $B$, let $[[A, B]]$ denote the set of homotopy classes of asymptotic morphisms from $A$ to $B$.  For an asymptotic morphism $\asy{ \phi_{t} }$, we will use $[ \asy{ \phi_{t} }]$ to denote the class in $[[A,B]]$ represented by $\asy{ \phi_{t} }$.  We define $E(A,B) = [[SA, \K\sur \otimes SB]]$.
\end{defin}

There is a natural isomorphism between $[[A, \K\sur \otimes B]]$ and $[[\K\sur\otimes A, \K\sur \otimes B]]$ (see Section~25.4.1 of \cite{blackadarbook}). Also as in the complex case, the set $[[A,\K\sur \otimes S B]]$ has the structure of an abelian group, defined using a chosen $*$-isomorphism $M_2(\K\sur) \cong \K\sur$. 

Let $\mathfrak{e} : 0 \to B \overset{ \iota }{ \to } E \overset{ \pi }{ \to } A \to 0$ be an exact sequence of $R^{*}$-algebras.  Suppose $B$ has a continuous approximate identity $\{ u_{t} \}_{ t \in [1, \infty ) }$ which is quasi-central in $E$.  Let $\sigma$ be a bounded continuous cross-section of $\pi$ (the existence of which is given by the (real) Bartle-Graves selection theorem).  Then the formula 
$$\phi_{t}^{ \mathfrak{e} } ( f \otimes a ) =  f( u_{t} ) \sigma (a) \qquad a \in A, f \in S\R$$
defines an
asymptotic morphism 
$\ftn{ \asy{ \phi_{t}^{ \mathfrak{e} } } }{ S A }{ B }$ and corresponding element 
$[\asy{ \phi_{t}^{ \mathfrak{e} } } ] = \ve_\mathfrak{e}$ in $[[SA, B]]$.

\begin{propo}\label{p:extasy}(See Proposition 25.5.1 in \cite{blackadarbook}.)
Let $\mathfrak{e} : 0 \to B \overset{ \iota }{ \to } E \overset{ \pi }{ \to } A \to 0$ be an exact sequence of $R^{*}$-algebras, where $B$ is separable. Then the class  $\ve_\mathfrak{e}$ is independent of the choices of continuous approximate identity $\{ u_{t} \}_{ t \in [1, \infty ) }$ and the cross-section $\sigma$.
\end{propo}

In the real case, we also have the tensor product construction for asymptotic morphisms described in Lemma II.B.$\beta$.5 of \cite{connesbook}. Given two asymptotic morphisms $\asy{\phi_t} \colon A \rightarrow C$ and $\asy{\psi_t} \colon B \rightarrow D$, there is an asymptotic morphism 
$\asy{(\phi \otimes \psi)_t} \colon A \otimes_{\rm{max}}  B \rightarrow C \otimes_{\rm{max}}  D$
which satisfies 
$\lim_{t \to \infty} \left((\phi \otimes \psi)_t(a \otimes b) - \phi_t(a) \otimes \psi_t(b) \right) = 0$
for all $a \in A$ and $b \in B$. 

As a special case, any asymptotic morphism $\asy{ \phi_{t} } \colon A \rightarrow B$ yields a suspension asymptotic morphism from $SA$ to $SB$, producing a natural element of $E(A, B)$. Similarly, the suspension construction produces a well-defined map $\Sigma \colon E(A, B) \rightarrow E(SA, SB)$, which can easily be shown to be a group homomorphism. Later in this section we show that $\Sigma$ is an isomorphism.

The associative product structure on $E$-theory described in Proposition II.B.$\beta$.4 of \cite{connesbook} also carries over to the case of $R^*$-algebras.  Given two asymptotic morphisms $\asy{\phi_t} \colon A \rightarrow B$ and $\asy{\psi_t} \colon B \rightarrow C$, there is a composition asymptotic morphism $\asy{\psi \circ \phi}_t \colon A \rightarrow C$, defined uniquely up to homotopy. In the special case that $\psi$ or $\phi$ is an actual $*$-homomorphism, then this product is a literal composition. In the general case, a reparametrization is necessary to construct an asymptotic morphism from the composition (see Section~25.3 of \cite{blackadarbook}). The resulting product induces a natural homomorphism $E(A, B) \otimes E(B, C) \rightarrow E(A, C)$. 

\begin{theor} \label{Eproperties}
$E(A,B)$ is a bivariant functor from separable $R^{*}$-algebras to abelian groups. In both arguments, it is homotopy invariant, stable, half exact, and has a degree 8 periodicity isomorphism. 
\end{theor}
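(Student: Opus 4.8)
The plan is to verify each of the asserted properties in turn, taking the development of (complex) $E$-theory in Section~25 of \cite{blackadarbook} and in \cite{ConnesHigson90} as a template and checking that the real case needs nothing new beyond the real analogues already assembled above: the quasi-central approximate identities of Lemma~\ref{l:quasicentral}, the extension asymptotic morphism of Proposition~\ref{p:extasy}, the Bartle--Graves section, the composition and tensor products on asymptotic morphisms recalled above, and the stabilization isomorphism $[[X, \K\sur \otimes Y]] \cong [[\K\sur \otimes X, \K\sur \otimes Y]]$ recalled before Definition~\ref{def:e}.

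First I would dispose of functoriality and homotopy invariance, which are formal. A $*$-homomorphism $f \colon A \to A'$ induces $Sf \colon SA \to SA'$, and precomposition with $Sf$ (a literal composition, since $Sf$ is a $*$-homomorphism, with no reparametrization needed) gives $f^{*} \colon E(A',B) \to E(A,B)$; a $*$-homomorphism $g \colon B \to B'$ induces $\id_{\K\sur} \otimes Sg$ and postcomposition gives $g_{*} \colon E(A,B) \to E(A,B')$. Associativity of the composition product makes $E$ a bifunctor, contravariant in the first and covariant in the second argument. Homotopic $*$-homomorphisms remain homotopic after applying $S$, hence induce homotopic asymptotic morphisms and the same class in $E$, in either argument, so $E$ is homotopy invariant. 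For stability in the first argument I would apply the isomorphism $[[\K\sur \otimes X, Y]] \cong [[X, Y]]$ with $X = SA$ and $Y = \K\sur \otimes SB$, using $S(\K\sur \otimes A) \cong \K\sur \otimes SA$ and checking that the isomorphism is the one induced by the rank-one inclusion $e_{A}$; for stability in the second argument I would use a fixed $*$-isomorphism $\K\sur \otimes \K\sur \cong \K\sur$ together with the fact that the resulting self-map of $\K\sur$ is approximately unitarily equivalent to the identity, exactly as in the complex case.

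The substantive step --- and the one I expect to be the main obstacle --- is half-exactness in each argument. In the covariant case, fix $A$ and an exact sequence $0 \to J \to B \xrightarrow{\pi} B/J \to 0$; the plan is the mapping-cone argument of \cite{blackadarbook}. First establish, for an arbitrary $*$-homomorphism $\phi \colon B' \to C'$, a Puppe-type exact sequence $E(A,SC') \to E(A,C_{\phi}) \to E(A,B') \to E(A,C')$ working directly with asymptotic morphisms; then, taking $\phi = \pi$, show that the inclusion $J \hookrightarrow C_{\pi}$ is invertible in $E$-theory by constructing an inverse asymptotic morphism $C_{\pi} \to J$ out of a quasi-central approximate identity (Lemma~\ref{l:quasicentral}) and a Bartle--Graves section of $\pi$; finally combine these with homotopy invariance and stability to get exactness of $E(A,J) \to E(A,B) \to E(A,B/J)$. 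In the contravariant case, exactness of $E(B/J,A) \to E(B,A) \to E(J,A)$, one instead uses the connecting asymptotic morphism $S(B/J) \to J$ furnished by Proposition~\ref{p:extasy}, together with the composition and tensor products on $E$-theory, to build a long exact sequence; this is the part of the theory that genuinely requires asymptotic morphisms rather than Kasparov bimodules, and where I expect the argument to demand the most care, although no new real-case phenomenon should arise since every ingredient is already in place.

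Finally, the degree $8$ periodicity follows formally from the machinery of Section~3. For fixed $A$, the covariant functor $E(A,-)$ from {\catr} to {\catabgp} is homotopy invariant, stable, and half exact, hence split exact by Corollary~\ref{splitexact}; Corollary~\ref{period} then yields a natural isomorphism $E(A,S^{8}B) \cong E(A,B)$. For fixed $B$, the functor $E(-,B)$ is a homotopy invariant, stable, half exact contravariant functor, and the contravariant analogues of Corollaries~\ref{splitexact} and \ref{period} give $E(S^{8}A,B) \cong E(A,B)$. This establishes the asserted $8$-fold periodicity in both arguments and completes the plan.
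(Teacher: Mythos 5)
Your overall list of properties and the formal parts (functoriality, homotopy invariance, stability via $\K\sur\otimes\K\sur\cong\K\sur$) match the paper, but your logical architecture for the two substantive items is reversed relative to the paper's, and the key step as you describe it has a gap. The paper does \emph{not} obtain split exactness from half exactness: it proves split exactness of $E(A,\cdot)$ and $E(\cdot,B)$ directly, via Propositions~\ref{p:extasy} and \ref{p:extasy2} and the real analogues of the results up to Corollary~25.5.7 of \cite{blackadarbook}; it then invokes Theorem~\ref{thm:univprop1} to pair $E$ with $\kk$, multiplies by the Bott element of $\kk(\R,S^8\R)$ to get the degree~$8$ periodicity, uses that (through Lemma~\ref{epsilonringhom}) to prove the suspension isomorphism $\Sigma\colon E(A,B)\to E(SA,SB)$ of Lemma~\ref{suspension}, and only \emph{then} proves half exactness: the real analogue of Lemma~25.5.12 of \cite{blackadarbook} produces the required factorization through $J$ only after one suspension, and $\Sigma$ is used to desuspend. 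In your plan, periodicity is deduced from half exactness (via Corollary~\ref{splitexact} and Corollary~\ref{period}, plus their unstated contravariant analogues), so everything hinges on your being able to prove half exactness without any appeal to desuspension.

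That is exactly where your sketch breaks down. The mechanism you propose for excision --- ``an inverse asymptotic morphism $C_\pi\to J$ built out of a quasi-central approximate identity and a Bartle--Graves section'' --- does not exist: the Connes--Higson construction of Proposition~\ref{p:extasy} attached to $0\to J\to B\xrightarrow{\pi}B/J\to 0$ produces an asymptotic morphism out of the \emph{suspension} $S(B/J)$, not out of $C_\pi$, and the strongest direct compatibility available is $[\gamma_{\mathfrak e}\circ\phi_t^{\mathfrak e}]=[\lambda_{\mathfrak e}]$ (Proposition~\ref{p:amorpmapc}); an inverse to $\gamma_*\colon E(A,J)\to E(A,C_\pi)$ can only be produced at the level of suspended, stabilized classes. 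If you make your Puppe-plus-excision argument precise along the lines available in \cite{blackadarbook}, you will land on the suspended statement of Lemma~25.5.12 and need $\Sigma$ to be an isomorphism --- but in your scheme $\Sigma$ comes from periodicity, which you derived from half exactness, so the argument is circular. A genuinely periodicity-free proof of excision does exist in the complex literature (Connes--Higson, Guentner--Higson--Trout) and could plausibly be adapted to $R^*$-algebras, but it is a lengthy argument with explicit homotopies, not the short construction you indicate; the whole point of the paper's ordering is to avoid it by securing split exactness directly from the extension asymptotic morphisms and then letting the Section~3 universal-property machinery deliver periodicity and the suspension isomorphism before half exactness is addressed.
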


\begin{proof}
The homotopy invariance is immediate and the stability follows from Proposition~\ref{stability}.  By Proposition~\ref{p:extasy}, any extension
$$\mathfrak{e} : 0 \rightarrow J \rightarrow A \xrightarrow{q} B \rightarrow 0$$
gives rise to a well-defined asymptotic morphism $\asy{ \phi_{t}^{ \mathfrak{e} } }$ from $S B$ to $J$.  Then the proofs leading up to and including Corollary~25.5.7 of \cite{blackadarbook} carry over to the real case to show that the functor $E(A,\cdot)$ is a split exact functor for fixed separable $A$.  Then by Theorem~\ref{thm:univprop1} (of the present paper) there is a bilinear pairing $E(A,B) \otimes \kk(B,C) \rightarrow E(A,C)$. Since this map is associative, multiplication by the Bott element in $\kk(\R, S^8 \R)$ induces a periodicity isomorphism in the second argument of $E(\cdot, \cdot)$. Similarly, $E( \cdot, B)$ is also split exact, so by the comments following Theorem~\ref{thm:univprop1} there is a natural pairing, $\kk(A, B) \otimes E(B, C) \rightarrow E(A,C)$ proving periodicity in the first argument.

We postpone the proof of half-exactness until after the following lemma.
\end{proof}

For any elements $x \in E(A, B)$ and $z \in \kk(B, C)$, the pairing described in the proof above gives an element in $E(A, C)$ which we will denote by $x \otimes_\alpha z$. Taking $1_A \in E(A, A)$ we obtain a homomorphism $\ve \colon \kk(A, B) \rightarrow E(A, B)$. For $x \in E(A, B)$ and $y \in E(B, C)$, we let $x \otimes_E y$ denote the product in $E(A, C)$.  Similarly for $z \in \kk(A, B)$ and $w \in \kk(B, C)$, we let $z \otimes_{KK} w$ denote the product in $\kk(A, C)$. For $x \in \kk(A, B)$ and $y \in \kk(B, C)$ it can easily be shown that 
$\ve(x \otimes_{KK} y) = \ve(x) \otimes_E \ve (y)$.

\begin{lemma} \label{suspension}
For any $R^*$-algebras, $\Sigma \colon E(A, B) \rightarrow E(SA, SB)$ is an isomorphism.
\end{lemma}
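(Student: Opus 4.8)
The plan is to recognize $\Sigma$ as external tensor multiplication by the object $S\R$ in the $E$-theory category, and then to observe that $S\R$ is invertible there by real Bott periodicity. First I would record the reformulation already anticipated in the paragraph preceding the lemma: if $x \in E(A,B)$ is represented by an asymptotic morphism $\asy{\phi_t} \colon SA \to \K\sur \otimes SB$, then under the evident $*$-isomorphisms $S(SA) \cong SA \otimes S\R$ and $S(\K\sur \otimes SB) \cong (\K\sur \otimes SB) \otimes S\R$ --- note that $S\R$ is nuclear, so $\otimes$ and $\otimes_{\mathrm{max}}$ agree --- the suspended asymptotic morphism $\asy{S\phi_t}$ is equivalent to the tensor product asymptotic morphism $\asy{(\phi \otimes \mathrm{id}_{S\R})_t}$ of Lemma~II.B.$\beta$.5 of \cite{connesbook}. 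Writing $z \otimes_{\mathrm{ext}} w$ for the class of the tensor product of representatives, this says that $\Sigma$ is precisely the map $x \mapsto x \otimes_{\mathrm{ext}} 1_{S\R}$, where $1_{S\R} \in E(S\R, S\R)$ is the class of $\mathrm{id}_{S\R}$.

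Next I would use that $S\R$ is invertible for $\otimes_{\mathrm{ext}}$. By real Bott periodicity, $S^8\R$ is $\kk$-equivalent to $\R$ (the equivalence already invoked in Corollary~\ref{period} and in the proof of Theorem~\ref{Eproperties}); since $\epsilon$ sends the identity to the identity and respects the composition product (Lemma~\ref{epsilonringhom}), it is a functor and so sends this $\kk$-equivalence to an $E$-equivalence, i.e.\ there are classes $b \in E(S^8\R,\R)$ and $\overline b \in E(\R, S^8\R)$ with $b \otimes_E \overline b = 1_{S^8\R}$ and $\overline b \otimes_E b = 1_{\R}$. Hence $S\R$ is an invertible object of the $E$-theory category with inverse $S^7\R$, so external multiplication by $S\R$ is an auto-equivalence and in particular a bijection on each morphism group. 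Concretely, a two-sided inverse of $\Sigma$ is given by $\Psi(y) = (1_A \otimes_{\mathrm{ext}} \overline b) \otimes_E (y \otimes_{\mathrm{ext}} 1_{S^7\R}) \otimes_E (1_B \otimes_{\mathrm{ext}} b)$, which one checks using associativity and the unit law for $\otimes_{\mathrm{ext}}$ together with the interchange law $(f \otimes_{\mathrm{ext}} g) \otimes_E (f' \otimes_{\mathrm{ext}} g') = (f \otimes_E f') \otimes_{\mathrm{ext}} (g \otimes_E g')$: for instance $\Psi(\Sigma(x))$ collapses to $(x \otimes_E 1_B) \otimes_{\mathrm{ext}} (\overline b \otimes_E b) = x \otimes_{\mathrm{ext}} 1_\R = x$, and $\Sigma(\Psi(y)) = y$ by the symmetric computation using $b \otimes_E \overline b = 1_{S^8\R}$. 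Since $\Sigma$ is already known to be a group homomorphism, a bijective one, it is an isomorphism.

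The argument is entirely formal, so the only real obstacle is the formalism it rests on: one needs that the tensor product and composition products of asymptotic morphisms --- both taken over from \cite{connesbook}, where they are asserted to pass verbatim to $R^*$-algebras --- are well defined on homotopy classes and are associative, unital, and related by the interchange law, i.e.\ that the $E$-classes organize into a symmetric monoidal category with unit $\R$, and that under the identifications above $\Sigma$ is exactly external multiplication by $S\R$. Granting this bookkeeping, the remaining input is just real Bott periodicity, which is already available; a reader preferring to avoid the categorical language can instead simply verify directly that the displayed $\Psi$ is a two-sided inverse of $\Sigma$.
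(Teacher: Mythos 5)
Your main ingredients are the same ones the paper uses: transfer the $\kk$-theoretic Bott equivalence between $\R$ and $S^8\R$ into $E$-theory via $\epsilon$ and Lemma~\ref{epsilonringhom}, view $\Sigma$ as external multiplication by an identity class, and cancel with the interchange law. Your first composite, $\Psi(\Sigma(x))=x$, is fine and is essentially the paper's computation that $\Theta\circ\Sigma^8=\id$. The gap is the claim that $\Sigma(\Psi(y))=y$ follows ``by the symmetric computation'': the two computations are not symmetric. Writing the second one out, $\Sigma(\Psi(y))$ becomes
$(1_A\otimes\bar b\otimes 1_{S\R})\otimes_E\bigl(y\otimes 1_{S^8\R}\bigr)\otimes_E(1_B\otimes b\otimes 1_{S\R})$,
and in this composite $y$ acts on one of the eight Bott coordinates produced by $\bar b$, while the original suspension coordinate of $SA$ rides along passively in the last slot; to cancel $\bar b$ against $b$ and recover $y$ you must re-order the nine suspension factors (and reconcile the $S\R\otimes S^7\R$ decomposition of $S^8\R$ used by $\bar b$ with the $S^7\R\otimes S\R$ decomposition appearing in the middle term). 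Permutations of suspension coordinates are not automatically harmless in $E$-theory --- an odd permutation is orientation-reversing and need not act trivially --- so this step requires justification. This is exactly the point on which the paper spends the final portion of its proof: it arranges matters so that the only rearrangement needed is an \emph{even} permutation of the factors of $S^9$, which is homotopic to the identity and therefore acts as the identity on $E$-theory.

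The gap is repairable without abandoning your formula. One route is to avoid the second composite entirely: pre- and post-composition with the $E$-equivalences $1_A\otimes\bar b$ and $1_B\otimes b$ are bijections, and $y\mapsto y\otimes 1_{S^7\R}$ is injective (tensor once more by $1_{S\R}$ and apply your clean cancellation with $\bar b$ and $b$, which needs no re-ordering); hence $\Psi$ is injective, and together with $\Psi\circ\Sigma=\id$ this forces $\Psi$ to be bijective and $\Sigma=\Psi^{-1}$. Alternatively, your categorical framing (an invertible object makes $-\otimes S\R$ fully faithful) is sound in principle, but then the braiding isomorphisms must be carried along as explicit morphisms rather than silently identifying permuted suspension factors; that full symmetric-monoidal coherence for real $E$-theory is more than the tensor-product and composition constructions quoted from Connes immediately provide, so it would have to be stated and verified --- or replaced, as in the paper, by the even-permutation observation.
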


\begin{proof}
Let $\alpha \in E(\R, S^8 \R)$ and $\beta \in E(S^8 \R, \R)$ be Bott elements arising from the corresponding Bott elements in $\kk$-theory via $\ve$. Since $\ve$ is multiplicative, it follows that these elements satisfy $\alpha \otimes_E \beta = 1_\R$ and $\beta \otimes \alpha = 1_{S^8 \R}$.  It follows that the map $z \mapsto (\alpha \otimes 1_A) \otimes_E z \otimes (\beta \otimes 1_B)$ is an isomorphism $\Theta$ from $E(S^8 A, S^8 B)$ to $E(A, B)$.

It can easily be shown that $\Theta \circ \Sigma^8 = \id_{E(A,B)}$ and that $\Sigma^8 \circ \Theta = \id_{E(S^8A, S^8 B)}$. Hence $\Sigma \colon E(S^n A, S^n B) \rightarrow E(S^{n+1} A, S^{n+1} B)$ is an isomorphism if $n \geq 7$. Finally, $\Sigma \colon E(A, B) \rightarrow E(SA, SB)$ can be shown to be an isomorphism using the diagram
\begin{equation*}
\xymatrix{
E(A, B)  \ar[rr]^\Sigma
&& E(SA, SB) \\
E(S^8 A, S^8 B)  \ar[rr]^\Sigma \ar[u]^\Theta
&& E(S^9 A, S^9 B) \ar[u]^\Theta  \; .
}
\end{equation*}
The vertical maps and the lower map are all known to be isomorphisms.
This diagram commutes modulo a homomorphism induced by the rearrangement of the order of the suspension factors of $S^9 A$ and $S^9 B$. Since the rearrangement of the factors corresponds to an even permutation, it is homotopic to the identity and induces an identity homomorphism on $E$-theory.  It follows that the $\Sigma \colon E(A, B) \rightarrow E(S A, S B)$ is an isomorphism.
\end{proof}

\begin{proof}[Completion of proof of Theorem~\ref{Eproperties}.]

Finally, to prove half-exactness let $0 \to J \overset{ \iota }{ \to } A \overset{ q }{ \to } B \to 0$ be an extension of separable $R^{*}$-algebras and $D$ be an $R^{*}$-algebra. Suppose $h$ is an asymptotic morphism from $SD \otimes \K_{ \R }$ to $SA \otimes \K_{ \R }$ such that $[ q \circ h ] = [ 0 ]$. Lemma~25.5.12 of \cite{blackadarbook} and its proof carry over to the real case, so there exists an asymptotic morphism $k$ from $S^2 D \otimes \K_{ \R }$ to $S^2 J \otimes \K_{ \R }$ such that $[ S \iota \circ k ] = [  S h ]$.  
In the commutative diagram
\begin{equation*}
\xymatrix{
E( D , J ) \ar[r]^{ \iota_{*} } \ar[d]^{\Sigma} & E( D , A ) \ar[r]^{ q_{*} } \ar[d]^{\Sigma} & E( D , B ) \ar[d]^{\Sigma}  \\
E( S D , S J ) \ar[r]^{ \iota_{*} }  & E( S D , S A ) \ar[r]^{ q_{*} } & E ( S D , S B ) \\
%E( S^{8} D , S^{8} J ) \ar[r]^{ \iota_{*} } & E( S^{8} D , S^{8} A ) \ar[r]^{ q_{*} } & E( S^{8} D , S^{8} B )
}
\end{equation*}
the vertical maps are isomorphisms, so there exists an asymptotic morphism $g$ from $SD \otimes \K_{ \R }$ to $SJ \otimes \K_{ \R }$ such that $[  \iota \circ g ] = [ h ]$ in $E( D , A )$.  

Half-exactness in the first argument is proved in a similar way.
\end{proof}

\begin{theor}\label{t:kkthyethyiso}
Let $A$ be a separable, nuclear $R^{*}$-algebra and let $B$ be a separable $R^{*}$-algebra. Then the homomorphism
$\ve \colon \kk( A , B ) \rightarrow E ( A , B )$ is an isomorphism.
\end{theor}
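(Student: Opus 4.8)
The plan is to apply Theorem~\ref{main} in the second variable. Fix the separable nuclear $R^{*}$-algebra $A$ and regard $F = \kk(A, -)$ and $G = E(A, -)$ as covariant functors from {\catr} to {\catabgp}, together with the natural transformation $\epsilon \colon F \to G$ constructed following the proof of Theorem~\ref{Eproperties}. The functor $G$ is homotopy invariant, stable, and half exact by Theorem~\ref{Eproperties}. The functor $F$ is homotopy invariant and stable by Proposition~\ref{kkproperties}, and it is half exact precisely because $A$ is nuclear: this is the real analog of the classical fact that $\kk(A, -)$ is half exact for nuclear $A$ (cf. Section~21.4 of \cite{blackadarbook}), which carries over to $R^{*}$-algebras via the completely positive lifting theorem for real $C^{*}$-algebras (see \cite{schroderbook}). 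Naturality of $\epsilon$ in $B$ follows from the associativity of the universal pairing of Theorem~\ref{thm:univprop1}, exactly as in the proof of Lemma~\ref{epsilonringhom} together with the identity $\alpha(x \otimes [f]) = f_{*}(x)$. With the hypotheses of Theorem~\ref{main} in place, we are reduced to proving that $\epsilon \colon \kk(A, B) \rightarrow E(A, B)$ is an isomorphism whenever $B$ is a $C^{*}$-algebra.

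For $B$ a $C^{*}$-algebra we complexify the first argument. Since $A$ is nuclear, so is its complexification $A\suc$, and as in Lemma~4.3 of \cite{boersema04} there is a natural isomorphism $\iota \colon \kk\pc(A\suc, B) \to \kk(A, B)$; the same adjunction between complexification and the forgetful functor, now applied to homotopy classes of asymptotic morphisms, yields a natural isomorphism $\iota_{E} \colon E\pc(A\suc, B) \to E(A, B)$. Because $\epsilon$ and its complex counterpart $\epsilon\pc$ are both obtained from the identity class via the respective universal pairings, and because $\iota$ and $\iota_{E}$ are compatible with those pairings, the square
\begin{equation*}
\xymatrix{
\kk\pc(A\suc, B) \ar[r]^{\epsilon\pc} \ar[d]_{\iota} & E\pc(A\suc, B) \ar[d]^{\iota_{E}} \\
\kk(A, B) \ar[r]^{\epsilon} & E(A, B)
}
\end{equation*}
commutes. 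By the Connes--Higson theorem in the complex case (see \cite{ConnesHigson90} and Section~25.6 of \cite{blackadarbook}), $\epsilon\pc$ is an isomorphism because $A\suc$ is separable and nuclear and $B$ is separable. Hence $\epsilon$ is an isomorphism for every $C^{*}$-algebra $B$, and Theorem~\ref{main} then gives the conclusion for all separable $R^{*}$-algebras $B$.

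The main obstacle I expect is the construction of $\iota_{E}$ and the verification that the square above commutes: one must check that the complexification--forgetful adjunction descends through the constructions $\beta B$, $\alpha B$, and the group structure on $[[SA, \K\sur \otimes SB]]$ of Definition~\ref{def:e} with enough naturality, mirroring the $\kk$-theoretic argument of \cite{boersema04}. Establishing half-exactness of $\kk(A, -)$ for nuclear real $A$ is routine once the real completely positive lifting theorem is invoked, but should be recorded carefully. All remaining steps are direct applications of the universal-property machinery of Section~3 and the $E$-theory constructions of Section~5.
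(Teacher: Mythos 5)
Your proposal is correct and follows essentially the same route as the paper: reduce via Theorem~\ref{main} to the case where $B$ is a $C^{*}$-algebra, then compare with the complex theory through the square relating $\kk\pc(A\suc,B)$, $E\pc(A\suc,B)$, $\kk(A,B)$, $E(A,B)$, using the complex Connes--Higson isomorphism for nuclear $A\suc$ and the restriction/complexification isomorphisms in the first variable. The only cosmetic difference is that the paper justifies commutativity of the square by the uniqueness statement of Theorem~3.7 of \cite{higson87} (both composites send $1_{A\suc}$ to the class of the inclusion $A \hookrightarrow A\suc$), which is the same universal-property mechanism you invoke, and it leaves the half-exactness of $\kk(A,-)$ for nuclear $A$ implicit where you record it explicitly.
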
   

\begin{proof}
By Theorem~\ref{main}, it suffices to show that $\kk(A,B) \rightarrow E(A,B)$ is an isomorphism when $B$ is a $C^{*}$-algebra. In the diagram
\begin{equation*}
\xymatrix{
\kk^{\C}(A\suc, B) \ar[r]^{~} \ar[d]  &
E^{\C}(A\suc, B) \ar[d] \\
\kk(A, B) \ar[r] &
E(A, B)   }
\end{equation*}
we use $\kk^{\C}(-,-)$ and $E^{\C}(-,-)$ to denote the versions of these functors on $C^{*}$-algebras (for example $E^{\C}(-,-)$ consists of homotopy classes of asymptotic morphisms that are asymptotically linear over $\C$). Since $A\suc$ is nuclear, the top horizontal homomorphism is an isomorphism by Theorem~25.6.3 of \cite{blackadarbook}. The left vertical homomorphism is an isomorphism by Lemma~4.3 of \cite{boersema04}. The right vertical homomorphism is defined by restriction---a complex asymptotic morphism defined on $S A\suc \otimes \K$ restricts to a real asymptotic morphism defined on $S A \otimes \K\sur$---and it is an isomorphism since every real asymptotic morphism on $S A \otimes \K\sur$ can be extended uniquely to a complex asymptotic morphism on 
$S A\suc \otimes \K$. The square commutes by Theorem~3.7 of \cite{higson87}, since the two directions around the square give natural transformations $\kk^{\C}(A\suc, -) \rightarrow E(A, -)$ of functors defined on separable $C^{*}$-algebras, each of which sends $1_A \in \kk^{\C}(A\suc, A\suc)$ to the asymptotic morphism represented by the inclusion of $A$ into $A\suc$. Therefore, the bottom row is an isomorphism as desired.
\end{proof}

Finally, we mention that with a similar application of Theorem~\ref{main}, we can easily obtain the following real analog of Theorem 5.8 of \cite{ManuilovThomsen04}, showing that $E$-theory is a special case of $\kk$-theory. 
Since $\mathcal{M}(B \otimes \K\sur)$ is $\kk$-trivial, it follows that $E(A, \mathcal{M}(B \otimes \K\sur)) = 0$.
Then we use the long exact sequence arising from 
$$0 \rightarrow B \otimes \K\sur \rightarrow \mathcal{M}(B \otimes \K\sur) \rightarrow \mathcal{Q}(B \otimes \K\sur) \rightarrow 0$$
to get an isomorphism
$E_0(A, \mathcal{Q}(B \otimes \K\sur)) \cong E_{-1}(A, B \otimes \K\sur)$. Combining this with stability and with Lemma~\ref{suspension}, there is an isomorphism
$$\gamma \colon E( S^{-1}A , \mathcal{Q}(B \otimes \K\sur) ) \rightarrow E ( A , B )  $$
for $R^*$-algebras $A$ and $B$.

\begin{theor}
Let $A$ and $B$ be a separable $R^{*}$-algebras. Then there is an isomorphism  \\
$$\ve' \colon \kk( S^{-1}A , \mathcal{Q}(B \otimes \K\sur) ) \rightarrow E ( A , B ) \; .$$
\end{theor}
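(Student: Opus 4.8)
The plan is to take $\ve'$ to be the composite $\gamma \circ \epsilon$, where $\epsilon \colon \kk(S^{-1}A, \mathcal{Q}(B \otimes \K\sur)) \to E(S^{-1}A, \mathcal{Q}(B \otimes \K\sur))$ is the canonical comparison map from $\kk$-theory to $E$-theory introduced just before Lemma~\ref{epsilonringhom}, and $\gamma \colon E(S^{-1}A, \mathcal{Q}(B \otimes \K\sur)) \to E(A,B)$ is the isomorphism constructed in the paragraph preceding the theorem. Since $\gamma$ is already an isomorphism, the theorem reduces to showing that $\epsilon$ is an isomorphism whenever its second variable is a corona algebra $\mathcal{Q}(B \otimes \K\sur)$. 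This is the $R^{*}$-algebra analogue of Theorem~5.8 of \cite{ManuilovThomsen04}; in contrast to Theorem~\ref{t:kkthyethyiso}, no nuclearity hypothesis on $A$ is needed, because a corona algebra is absorbing enough to force $\kk$ and $E$ to agree.

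I would first reduce the first variable to a $C^{*}$-algebra. Fix $B$ and put $D = \mathcal{Q}(B \otimes \K\sur)$. The assignments $C \mapsto \kk(C, D)$ and $C \mapsto E(C, D)$ are contravariant functors on \catr\ that are homotopy invariant and stable; the first is half exact in $C$ because $\kk$-theory is half exact in its first variable for any coefficient algebra, and the second is half exact by Theorem~\ref{Eproperties}. Since $\epsilon$ is a natural transformation between them, the contravariant version of Theorem~\ref{main} (whose proof is the same, the $\ct$-module structure of Proposition~\ref{thm:crtmod} being available in both slots) shows that $\epsilon \colon \kk(C,D) \to E(C,D)$ is an isomorphism for all separable $R^{*}$-algebras $C$ as soon as it is one for every $C^{*}$-algebra $C$; specializing afterwards to $C = S^{-1}A$ then disposes of all separable $R^{*}$-algebras $A$.

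With $A$, hence $C = S^{-1}A$, now a $C^{*}$-algebra, I would next reduce the second variable. The target $B \mapsto E(A,B)$ is a homotopy invariant, stable, half exact functor on \catr\ by Theorem~\ref{Eproperties}, so applying Theorem~\ref{main} once more --- this time to the functors $B \mapsto \kk(S^{-1}A, \mathcal{Q}(B \otimes \K\sur))$ and $B \mapsto E(A,B)$ and the natural transformation $\ve'_{A,B}$ --- would reduce us to the case that $B$ is a $C^{*}$-algebra. The obstruction is that $B \mapsto \mathcal{Q}(B \otimes \K\sur)$ is not literally a functor on \catr, so one must first equip $B \mapsto \kk(S^{-1}A, \mathcal{Q}(B \otimes \K\sur))$ with the needed functoriality, homotopy invariance, stability, and half-exactness. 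This is exactly the technical core of the complex theorem, carried out in \cite{ManuilovThomsen04} and Chapter~25 of \cite{blackadarbook} by means of the mapping-cone construction, quasi-central approximate units, and the six-term sequence attached to $0 \to B \otimes \K\sur \to \mathcal{M}(B \otimes \K\sur) \to \mathcal{Q}(B \otimes \K\sur) \to 0$; those arguments transfer to $R^{*}$-algebras.

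Finally, suppose $A$ and $B$ are $C^{*}$-algebras. Then $\mathcal{Q}(B \otimes \K\sur)$ is a complex $C^{*}$-algebra, so Lemma~4.3 of \cite{boersema04} identifies $\kk(S^{-1}A, \mathcal{Q}(B \otimes \K\sur))$ with $\kk\pc(S(A\suc), \mathcal{Q}(B \otimes \K))$ (using $(S^{-1}A)\suc \cong S(A\suc)$), while the restriction--extension argument used for the right-hand vertical map in the proof of Theorem~\ref{t:kkthyethyiso} identifies $E(A,B)$ with $E\pc(A\suc, B)$. Under these identifications $\ve'_{A,B}$ becomes the canonical comparison homomorphism $\kk\pc(S(A\suc), \mathcal{Q}(B \otimes \K)) \to E\pc(A\suc, B)$, which is an isomorphism by Theorem~5.8 of \cite{ManuilovThomsen04}. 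Tracing this back through $\gamma$, the complexification isomorphisms, and the suspension and $8$-periodicity identifications of $E$-theory completes the proof. I expect the main obstacle to be the second reduction step above --- supplying the functorial structure on $B \mapsto \kk(S^{-1}A, \mathcal{Q}(B \otimes \K\sur))$ needed to invoke Theorem~\ref{main} --- together with the routine but fiddly bookkeeping of matching the $8$-periodic real picture with the $2$-periodic complex one and checking that $\gamma$, $\epsilon$, and the complexification maps are mutually compatible.
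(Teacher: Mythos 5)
Your skeleton is the paper's: you define $\ve' = \gamma \circ \epsilon$, verify the complex case by Theorem~5.8 of \cite{ManuilovThomsen04} through the complexification square, and pass to $R^{*}$-algebras with Theorem~\ref{main}. But the way you organize the reduction has two concrete problems. First, your reduction of the first variable rests on the claim that $C \mapsto \kk(C, \mathcal{Q}(B \otimes \K\sur))$ is half exact ``in its first variable for any coefficient algebra.'' That is not available: the paper records only split exactness of $\kk$ in each variable (Proposition~\ref{kkproperties}), and half exactness of $\kk$ with respect to arbitrary extensions is false in general (it requires semisplit extensions or nuclearity --- this is exactly why Theorem~\ref{t:kkthyethyiso} carries a nuclearity hypothesis); moreover the coefficient algebra here is a corona, which is not $\sigma$-unital, so it sits outside the setting in which the paper's $\kk$-machinery and Theorem~\ref{main} are stated. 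This step is also unnecessary: in the paper's proof the first variable is never reduced to a $C^{*}$-algebra, because Lemma~4.3 of \cite{boersema04} gives the isomorphism $\kk^{\C}(S^{-1}A\suc, D) \cong \kk(S^{-1}A, D)$ for an \emph{arbitrary} separable real $A$ once $D$ is complex, and Theorem~5.8 of \cite{ManuilovThomsen04} needs no nuclearity, so the square already handles every real $A$ with $B$ a $C^{*}$-algebra.

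Second, at the step where Theorem~\ref{main} is actually applied --- the reduction in the variable $B$ --- you defer the essential input (equipping $B \mapsto \kk(S^{-1}A, \mathcal{Q}(B \otimes \K\sur))$ with the functoriality and exactness properties) to ``transferring the technical core'' of \cite{ManuilovThomsen04} and Chapter~25 of \cite{blackadarbook} to the real case. As written this is an unfilled gap, and it is precisely the heavy machinery the theorem is designed to bypass: the paper says, just before the statement, that the point is to obtain the result \emph{without} reproducing those technicalities. The paper's actual proof is short: with $B$ a $C^{*}$-algebra, it checks that $\ve'$ fits into the two-square diagram --- the left square a specialization of the square in the proof of Theorem~\ref{t:kkthyethyiso}, the right square commuting because the maps making up $\gamma$ (stabilization, the boundary map of the long exact sequence, and the suspension isomorphism of Lemma~\ref{suspension}) all commute with the restriction map from the complex to the real theory --- and then invokes Theorem~\ref{main} once, in $B$, exactly as in Theorem~\ref{t:kkthyethyiso}. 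So you have identified the right destination and the right final ingredients, but your first reduction is not justified (and not needed), and your second reduction, the crux, is asserted rather than carried out.
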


\begin{proof}
Let $\ve' = \gamma \circ \ve$ where $\ve$ is the isomorphism of Theorem~\ref{t:kkthyethyiso}. We know that $\ve'$ is an isomorphism in the complex case by Theorem 5.8 of \cite{ManuilovThomsen04}. So it suffices by Theorem~\ref{main} to show that $\ve'$ fits in a commutative square in the same way that $\ve$ does in the proof of Theorem~\ref{t:kkthyethyiso}. The square in question can be factored into two squares as follows:
\begin{equation*}
\xymatrix{
\kk^{\C}(S^{-1} A\suc, \mathcal{Q}(B \otimes \K\sur)) \ar[r]^{~} \ar[d]  &
E^{\C}(S^{-1} A\suc, \mathcal{Q}(B \otimes \K\sur)) \ar[r] \ar[d] &
E^{\C}(A\suc, B) \ar[d] \\
\kk(S^{-1} A, \mathcal{Q}(B \otimes \K\sur)) \ar[r] &
E(S^{-1} A, \mathcal{Q}(B \otimes \K\sur)) \ar[r] &
E(A, B)   }
\end{equation*}
The first square is just a specialization of the square in Theorem~\ref{t:kkthyethyiso}. The second square commutes since the horizontal maps are homomorphisms that arise from stabilization, from long exact sequences, and from suspensions; all of which commute with the vertical restriction map.
\end{proof}

\section{Application:  asymptotic morphisms on spheres}

The goal of this section is to determine when there exist asymptotic morphisms from suspensions of $\R$ to $\K_{\R}$ and to $\K_{\R} \otimes \Ham$, that are detected by $K$-theory. For this we will use the results of the previous sections, the universal coefficient theorem for real $C^{*}$-algebras, and a united $K$-theory analysis. The main theorem of this section is the following.

\begin{theor}\label{t:asynonvanishing}
Let $d \in \N$.
There exists an asymptotic morphism $ \asy{ \phi_{t}  }$ that induces a non-trivial homomorphism on $K$-theory of the form
\begin{enumerate}
\item  ${ S^d \R } \rightarrow { \K_{\R}   }$ if and only if $d \equiv 0, 4, 6, 7 \pmod 8$, 
\item  ${ S^d \R } \rightarrow { \K_{\R} \otimes \Ham }$ if and only if $d \equiv 0, 2, 3, 4\pmod 8$ 
\end{enumerate}
\end{theor}

The correspondence $\asy{ \phi_{t} }  \mapsto \asy{ \phi_{t} }_* $ gives a map
$$[[A, B]] \rightarrow \hoom\scrt(K\crt(A), K\crt(B))$$
that respects compositions the expected way and, when $B$ is stable, is a homomorphism of semi-groups. Therefore, there is a group homomorphism
$\gamma'$ defined on $E(A, B)$ that can be shown using Theorem~\ref{thm:univprop1} to commute in the following diagram.
\[
\xymatrix{
KK(A,B) \ar[rd]_\ve \ar[rr]^\gamma && \hoom\scrt(K\crt(A), K\crt(B)) \\
& E(A,B) \ar[ru]_{\gamma'}
}
\]

Now in that case that the first argument is a suspension algebra $SA$, we can further factor $\ve$ as in the following diagram
\[
\xymatrix{
&KK(SA,B) \ar[rd]^\ve \ar[ld]_{\ve'} \\
[[SA, \K\sur \otimes B]] \ar[rr]^\Sigma && E(SA,B) = [[S^2 A, \K\sur \otimes SB]]  
}
\]
where the map $\ve'$ is described as follows. An element of $KK(SA, B)$ is associated with an extension 
$\mathfrak{e} : 0 \to B \overset{ \iota }{ \to } E \overset{ \pi }{ \to } A \to 0$ using the isomorphism 
$KK(SA, B) \cong \ext(A, B)^{-1}$ of Kasparov (Theorem~7.1 of \cite{kasparov80}).  Then the construction of Theorem~\ref{p:extasy} produces an asymptotic morphism $\ve_\frak{e}$
from $SA$ to $B$, giving an element of $[[SA, \K\sur \otimes B]]$. Again with a little work, Theorem~\ref{thm:univprop1} can be used to show that the diagram commutes (also see the comment following Corollary~25.5.8 in \cite{blackadarbook}).

We are not claiming to have shown that $\ve'$ is an isomorphism although that is the case in the complex setting (see Corollary 5.3 of \cite{DadarlatLoring94}). We hope to address unsuspended $E$-theory for $R^*$-algebras more thoroughly in future work. For our current purposes it is enough to know that $\gamma$ factors through $\ve'$.

\begin{theor}\label{t:CRTnonvanishing}
Let $d \in \N$. There exists a non-trivial \ct-homomorphism of degree 0 of the form
\begin{enumerate}
\item $K\crt(S^d \R) \rightarrow K\crt(\R)$ if and only if $d \equiv 0, 4, 6, 7 \pmod 8$
\item  $K\crt(S^d \R) \rightarrow K\crt(\Ham)$ if and only if $d \equiv 0, 2, 3, 4\pmod 8$.
\end{enumerate}
\end{theor}

\begin{proof}
The $\ct$-module $K\crt(S^d \R)$ is a free \ct-module with a single generator in the real part in degree $-d$. Hence there exists a non-trivial \ct-module homomorphism $K\crt(S^d \R) \rightarrow M$ if and only if $M\so^{-d} \neq 0$. Now $K_*(\R)$ is non-zero in and only in degrees $0, 1, 2, 4 \pmod 8$; and $K_*(\Ham)$ is non-zero in and only in degrees $0, 4, 5, 6 \pmod 8$. Thus parts (1) and (2) follow as well as the converse statements.
\end{proof}

We are now ready to prove Theorem~\ref{t:asynonvanishing}.

\begin{proof}[Proof of Theorem \ref{t:asynonvanishing}]  
For $d \geq 1$, where $d$ is not in one of the required congruence classes, Theorem~\ref{t:CRTnonvanishing} implies that no asymptotic morphism can exist that induces a non-trivial homomorphism on $K$-theory.

Now suppose that $d \geq 1$ and suppose that $A =  S^d \R$ and $B = \R$  or $B = \Ham$ are algebras such that the form $A \rightarrow B$ matches the conditions of the statement of Theorem~\ref{t:asynonvanishing}. By Theorem~\ref{t:CRTnonvanishing}, there is a non-zero homomorphism $K\crt(A) \rightarrow K\crt(B)$ and by the Universal Coefficient Theorem for real $C^{*}$-algebras (Theorem~1.1 of \cite{boersema04}), this \ct-module homomorphism is induced by a nonzero element $\xi \in \kk(A, B)$. Then $\ve'(\xi)$ is a class in $[[A, \K\sur \otimes B]]$ and when we choose a representative we obtain an asymptotic morphism that induces a non-trivial map on united $K$-theory. \end{proof}

\section{Almost commuting matrices}

We now use this machinery to find novel examples of almost
commuting real symmetric matrices. Our approach is to use commutative
$R^{*}$-algebras and create asymptotic morphisms out of these. On
the one hand these carry $K$-theory data that can distinguish them
from actual $*$-homomorphisms. On the other, in the image the relations
that make the $R^{*}$-algebra commutative get turned into the property
of being almost commuting.

Let $\F$ denote either $\R$ or $\C$ or $\Ham$ and let $d \in \N$. We generalize a question of Halmos \cite{DavidsonEssentially2010}
to ask about $d$ almost commuting self-adjoint matrices over $\F$. 

\begin{probl}
\label{HalmosProblem} For all $\ve>0$, does there exist $\delta>0$
so that, for all $n$, given $d$ self-adjoint contractions $H_{r}$
in $\mathbf{M}_{n}(\F)$ such that
\[
\left\Vert \left[H_{r},H_{s}\right]\right\Vert \leq\delta,
\]
there exist $d$ self-adjoint contractions $K_{r}$ with
$\left\Vert K_{r}-H_{r}\right\Vert \leq\ve$
and 
\[
\left[K_{r},K_{s}\right]=0?
\]
\end{probl}

In the complex case, Lin \cite{LinAlmostCommutingHermitian} showed that in the 
anwer is ``yes'' for $d=2$ while it was known much earlier
\cite{VoiculescuHeisenberg} that the answer is ``no'' for $d = 3$.
For the quaternionic case, the result is the same: yes for $d=2$
\cite{LorSorensenTorus} and no for $d=3$ \cite{HastLorTheoryPractice}.

These leaves the real case, arguably the most important case. We know the
answer is ``yes'' for $d=2$ (\cite{LorSorensenTorus}). We will show that the answer is ``no'' for $d = 5$, leaving open the cases $d = 3,4$.
The proof techniques used for a negative result for $d=3$ in the complex and quaternionic cases rely on the fact
that $K_{-2}(\Ham) \neq 0$ and
$K_{-2}(\C) \neq 0$ and so will not
work for $\F=\R$ since $K_{-2}(\R) = 0$.
However, since $K_{-4}(\R)$ is nontrivial, we will see that these methods will apply for $d = 5$.

We start by connecting this problem to a problem couched in the theory of $R^*$-algebras. For any sequence $B_n$ of $R^*$-algebras, let $\pi$ be the quotient map from the product
$\prod_{n=1}^\infty B_n$ to its quotient by the sum $\left.\prod_{n=1}^{\infty} B_n \right/\bigoplus_{n=1}^{\infty} B_n $.

\begin{probl}
\label{HalmosProblem2} Does every $*$-homomorphism of the form
$$\psi \colon S^{d-1} \R \rightarrow \left.\prod_{n=1}^{\infty} \mathbf{M}_{m(n)}(\F) \right/\bigoplus_{n=1}^{\infty} \mathbf{M}_{m(n)}(\F) $$
where $\{m(n)\}_{n=1}^\infty$ is a sequence of integers,
lift to a $*$-homomorphism
$$\tilde{\psi} \colon S^{d-1} \R \rightarrow \prod_{n=1}^\infty \mathbf{M}_{m(n)}(\F)$$
such that $\psi = \pi  \circ \widetilde{\psi}$?
\end{probl}

\begin{thm} \label{2Problems}
For a fixed positive integer $d$ and division algebra $\F$, if the answer to Problem~\ref{HalmosProblem} is ``yes'' then the answer to Problem~\ref{HalmosProblem2} is also ``yes''. 
\end{thm}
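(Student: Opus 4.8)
\begin{skproof}
The plan is to use that the commutative $R^{*}$-algebra underlying $S^{d-1}\R$ becomes, after unitization, the universal unital commutative $R^{*}$-algebra on $d$ commuting self-adjoint contractions subject to a single sphere relation, and then to feed a positive answer to Problem~\ref{HalmosProblem} into a lifting argument on matrix sequences, exactly parallel to the complex case. Write $\mathcal{S}=\{x\in\R^{d}:\|x\|=1\}$ for the unit $(d-1)$-sphere and recall that $(S^{d-1}\R)^{+}\cong C(\mathcal{S},\R)$, with $C(\mathcal{S},\R)$ generated by $d$ commuting self-adjoint contractions $x_{1},\dots,x_{d}$ (the Euclidean coordinate functions) subject to $\sum_{i=1}^{d}x_{i}^{2}=1$, and with $S^{d-1}\R$ sitting inside it as the kernel of evaluation at the point $(0,\dots,0,1)$. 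Since $\left.\prod_{n=1}^{\infty}\mathbf{M}_{m(n)}(\F)\right/\bigoplus_{n=1}^{\infty}\mathbf{M}_{m(n)}(\F)$ is unital, a $*$-homomorphism $\psi$ as in Problem~\ref{HalmosProblem2} extends to a unital $*$-homomorphism $\psi^{+}$ on $C(\mathcal{S},\R)$, so it suffices to lift $\psi^{+}$ to a unital $*$-homomorphism into $\prod_{n=1}^{\infty}\mathbf{M}_{m(n)}(\F)$ and then restrict it to $S^{d-1}\R$. Concretely, $\psi^{+}$ is determined by the tuple $h_{i}=\psi^{+}(x_{i})$ of commuting self-adjoint contractions in the quotient with $\sum_{i}h_{i}^{2}=1$.

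Next I would lift and invoke Problem~\ref{HalmosProblem}. Pick self-adjoint contractive lifts $\tilde h_{i}=(\tilde h_{i}^{(n)})_{n}\in\prod_{n=1}^{\infty}\mathbf{M}_{m(n)}(\F)$ of the $h_{i}$ (obtained from any lift by symmetrising and truncating); since the $h_{i}$ commute and $\sum_{i}h_{i}^{2}=1$ in the quotient, the relevant defects lie in $\bigoplus_{n}\mathbf{M}_{m(n)}(\F)$, so
$$\max_{i,j}\bigl\|[\tilde h_{i}^{(n)},\tilde h_{j}^{(n)}]\bigr\|\to 0 \qquad\text{and}\qquad \Bigl\|\sum_{i}(\tilde h_{i}^{(n)})^{2}-1\Bigr\|\to 0 \quad(n\to\infty).$$
Assuming the answer to Problem~\ref{HalmosProblem} is ``yes'', fix (without loss of generality non-decreasing) $\epsilon\mapsto\delta(\epsilon)>0$ witnessing it; since the commutators tend to $0$, one may choose $\epsilon_{n}\to 0$ with $\delta(\epsilon_{n})\ge\max_{i,j}\|[\tilde h_{i}^{(n)},\tilde h_{j}^{(n)}]\|$ for all large $n$, and here the \emph{uniformity} of $\delta$ --- its independence of the matrix size $m(n)$ --- is precisely what is needed. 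Problem~\ref{HalmosProblem} then yields commuting self-adjoint contractions $k_{i}^{(n)}\in\mathbf{M}_{m(n)}(\F)$ with $\|k_{i}^{(n)}-\tilde h_{i}^{(n)}\|\le\epsilon_{n}$ for all large $n$ (put $k_{i}^{(n)}=0$ for the finitely many remaining $n$); in particular $\|k_{i}^{(n)}-\tilde h_{i}^{(n)}\|\to 0$, and hence also $\|\sum_{i}(k_{i}^{(n)})^{2}-1\|\to 0$.

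The last step is the only delicate one: the commuting tuple $(k_{i}^{(n)})_{i}$ defines a $*$-homomorphism out of $C([-1,1]^{d},\R)$, but we need one out of $C(\mathcal{S},\R)$, so its joint spectrum must be pushed onto $\mathcal{S}$. For each $n$ the finite-dimensional commutative $R^{*}$-subalgebra of $\mathbf{M}_{m(n)}(\F)$ generated by $k_{1}^{(n)},\dots,k_{d}^{(n)}$ is isomorphic to $C(\Sigma_{n},\R)$ for a finite joint-spectrum set $\Sigma_{n}\subset[-1,1]^{d}$ on which the $k_{i}^{(n)}$ are the coordinate functions; call the resulting embedding $\bar\rho^{(n)}\colon C(\Sigma_{n},\R)\to\mathbf{M}_{m(n)}(\F)$. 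Since $\|\sum_{i}(k_{i}^{(n)})^{2}-1\|\to 0$, the set $\Sigma_{n}$ lies in a thin annular neighbourhood of $\mathcal{S}$, so for large $n$ the radial retraction $r_{n}(x)=x/\|x\|$ is defined on $\Sigma_{n}$, maps it into $\mathcal{S}$, and obeys $\sup_{x\in\Sigma_{n}}\|r_{n}(x)-x\|\to 0$. Setting $\sigma^{(n)}=\bar\rho^{(n)}\circ r_{n}^{*}\colon C(\mathcal{S},\R)\to\mathbf{M}_{m(n)}(\F)$ (with $\sigma^{(n)}$ arbitrary for the finitely many bad $n$) gives $\|\sigma^{(n)}(x_{i})-k_{i}^{(n)}\|\to 0$, so $\widetilde\psi^{+}:=(\sigma^{(n)})_{n}$ is a unital $*$-homomorphism $C(\mathcal{S},\R)\to\prod_{n=1}^{\infty}\mathbf{M}_{m(n)}(\F)$ with $\pi\circ\widetilde\psi^{+}(x_{i})=\pi((k_{i}^{(n)})_{n})=\pi((\tilde h_{i}^{(n)})_{n})=h_{i}=\psi^{+}(x_{i})$; since the $x_{i}$ generate, $\pi\circ\widetilde\psi^{+}=\psi^{+}$, and restricting $\widetilde\psi^{+}$ to $S^{d-1}\R$ gives the required lift of $\psi$. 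The main obstacle is exactly this spherical correction: Problem~\ref{HalmosProblem} only supplies commuting matrices, and one must retract their joint spectrum back onto $\mathcal{S}$ to obtain a homomorphism defined on $S^{d-1}\R$; the supporting facts (self-adjoint and contractive lifts along the sequence quotient, finite-dimensional commutative functional calculus, and the retraction estimate) are routine and work uniformly over $\F=\R,\C,\mathbb{H}$.
\end{skproof}
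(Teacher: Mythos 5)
Your proposal is correct and follows essentially the same route as the paper: lift to asymptotically commuting self-adjoint contractions, invoke the Halmos hypothesis (with its dimension-independent $\delta$) to get exactly commuting approximants, correct the sphere relation, and reassemble a $*$-homomorphism via the universal property of $C(S^{d-1},\R)$, checking $\pi\circ\tilde\psi=\psi$ on the coordinate generators. Your radial retraction of the joint spectrum is, under Gelfand duality, exactly the paper's Lemma~\ref{approxcircle} rescaling of the commuting tuple, so the two arguments coincide in substance.
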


We prove Theorem~\ref{2Problems} below following this basic lemma stating the universal properties of $C(S^{d-1}, \R)$. This lemma can be proven using the techniques of Chapter~3 of \cite{loringbook}.

\begin{lemma} \label{classifyingspaces}
If $h_1, \dots, h_d$ are commuting self-adjoint contractions in a $R^*$-algebra $A$ that satisfy $\sum_{j = 1}^d h_j^{2} = 1$, then there is a unique $*$-homomorphism
$\psi \colon C(S^{d-1}, \R) \rightarrow A$ sending the $j$th coordinate function $f_j$ to $h_j$.
\end{lemma}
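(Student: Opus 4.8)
The plan is to deduce this from Gelfand duality for commutative real $C^{*}$-algebras together with the real Stone--Weierstrass theorem, which is the concrete content underlying the universal-$C^{*}$-algebra-by-generators-and-relations machinery of Chapter~3 of \cite{loringbook}. The key point is that $C(S^{d-1},\R)$ is the universal unital commutative real $C^{*}$-algebra generated by self-adjoint elements $f_{1},\dots,f_{d}$ subject to the relations $f_{i}f_{j}=f_{j}f_{i}$ and $\sum_{j=1}^{d}f_{j}^{2}=1$, the latter being the relation that cuts $S^{d-1}$ out of $\R^{d}$. Note that once the $h_{j}$ satisfy $\sum_{j}h_{j}^{2}=1$, the unit $1=\sum_{j}h_{j}^{2}$ lies in $A$, so $A$ is automatically unital and each $h_{j}$ is automatically a contraction; thus we may assume throughout that $A$ is unital.

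\emph{Existence.} First I would form the commutative real $C^{*}$-subalgebra $B=C^{*}(h_{1},\dots,h_{d})\subseteq A$; it is commutative because the $h_{j}$ are commuting and self-adjoint. By real Gelfand--Naimark duality (see \cite{schroderbook}), $B\cong C(X,\C)^{\tau}$ for some compact Hausdorff space $X$ equipped with a homeomorphic involution $\tau$. Since the images $\widehat h_{j}$ of the generators are real-valued and generate, hence separate the points of $X$, one checks that $\tau=\mathrm{id}$, so $B\cong C(X,\R)$. The continuous map $X\to\R^{d}$, $x\mapsto(\widehat h_{1}(x),\dots,\widehat h_{d}(x))$, is injective because the $\widehat h_{j}$ separate points, and its image $Z$ is a compact subset of $S^{d-1}$ because $\sum_{j}\widehat h_{j}^{2}=\widehat{\sum_{j}h_{j}^{2}}=\widehat 1=1$ on $X$. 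Thus $B\cong C(Z,\R)$ by an isomorphism carrying $\widehat h_{j}$ to the restriction $f_{j}|_{Z}$ of the $j$th coordinate function. Defining $\psi$ to be the composite
\[
C(S^{d-1},\R)\xrightarrow{\ \mathrm{restriction}\ }C(Z,\R)\xrightarrow{\ \cong\ }B\hookrightarrow A
\]
then gives a $*$-homomorphism with $\psi(f_{j})=h_{j}$ for every $j$.

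\emph{Uniqueness.} Any $*$-homomorphism $\psi'\colon C(S^{d-1},\R)\to A$ with $\psi'(f_{j})=h_{j}$ agrees with $\psi$ on every real polynomial in $f_{1},\dots,f_{d}$. By the real Stone--Weierstrass theorem the unital real subalgebra of $C(S^{d-1},\R)$ generated by the coordinate functions is dense, since the $f_{j}$ separate the points of $S^{d-1}$; hence $\psi'=\psi$ by continuity.

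The argument is essentially routine; the only step needing genuine care is the identification of $B$ with $C(Z,\R)$ for a compact $Z\subseteq S^{d-1}$ --- that is, checking that the involution on the real spectrum of $B$ is trivial and that the joint spectrum lands in the sphere. If one prefers to avoid real Gelfand theory, this step can instead be run through complexification: $C(S^{d-1},\R)_{\C}\cong C(S^{d-1},\C)$, the tuple $(h_{1},\dots,h_{d})$ satisfies the same relations inside $A_{\C}$, and the classical complex universal property of $C(S^{d-1},\C)$ --- equivalently the joint continuous functional calculus for a commuting tuple of self-adjoint elements, as in Chapter~3 of \cite{loringbook} --- yields a unique complex $*$-homomorphism $C(S^{d-1},\C)\to A_{\C}$, which one then checks intertwines the real structures (it sends the real-valued coordinate functions to the self-adjoint elements $h_{j}$) and therefore restricts to the desired $\psi\colon C(S^{d-1},\R)\to A$.
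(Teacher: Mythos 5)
Your proof is correct. Note that the paper does not actually prove this lemma: it simply declares it a standard consequence of the generators-and-relations machinery of Chapter~3 of \cite{loringbook}, i.e.\ the statement that $C(S^{d-1},\R)$ is the universal (real) $C^*$-algebra on $d$ commuting self-adjoint generators with $\sum_j f_j^2 = 1$. What you have written is an explicit verification of exactly that universal property, by a somewhat different concrete route: real Gelfand--Naimark duality for the subalgebra $C^*(h_1,\dots,h_d)$, identification of its spectrum with a compact subset $Z\subseteq S^{d-1}$, and real Stone--Weierstrass for uniqueness. The one genuinely delicate point --- that the involution $\tau$ on the complex spectrum is trivial because the algebra is generated by self-adjoint (hence real-valued, $\tau$-invariant) elements --- you correctly single out, and it does check out: if $\tau x\neq x$ one can exhibit a non-real-valued element of $C(X,\C)^\tau$, contradicting the fact that the closed real algebra generated by real-valued $\tau$-invariant functions consists only of such functions. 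Two small remarks: you silently (and correctly) read the hypothesis as $\sum_j h_j^2=1$, which is the relation actually used when the lemma is applied in the paper (the displayed ``$\sum_j h_i=1$'' is a typo); and in the uniqueness step it is worth saying explicitly that $\psi'(1)=\psi'\bigl(\sum_j f_j^2\bigr)=\sum_j h_j^2=1_A$, so that $\psi'$ and $\psi$ agree on constants and hence on the unital polynomial algebra before invoking Stone--Weierstrass. Your closing alternative via complexification and the complex universal property, checking compatibility with the real structures, is an equally valid variant and is closer in spirit to the reference the paper cites.
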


\begin{proof}[Proof of Theorem~\ref{2Problems}]
Suppose that the answer to Problem~\ref{HalmosProblem} is ``yes'' for some $d$ and $\F$, and let 
$$\psi \colon C(S^{d-1}, \F) \rightarrow 
	\left.\prod_{n=1}^{\infty} \mathbf{M}_{m(n)}(\F) \right/\bigoplus_{n=1}^{\infty} \mathbf{M}_{m(n)}(\F) $$
be a $*$-homomorphism. Then taking the images of the coordinate functions in $C(S^{d-1}, \R)$ and lifting them to representatives in 
$\left.\prod_{n=1}^{\infty} \mathbf{M}_{m(n)}(\F) \right. $
we find that there exist sequences of matrices $H_{in} \in \mathbf{M}_{m(n)} (\R)$ ($i \in \{1, \dots, d\}$, $n \in \N$) that are asymptotically (as $n \to \infty$) self-adjoint contractions,  that satisfy $\sum_{i = 1}^d H_{in}^2 = 1$ asymptotically, and such that  $H_{in}$ and $H_{jn}$ asymptotically commute for each $i,j \in \{1, \dots, d\}$. We may assume that each $H_{in}$ is exactly self-adjoint by replacing $H_{in}$ by $\tfrac{1}{2} \left( H_{in} + H_{in}^* \right)$.

By our hypothesis, there exists sequences of self-conjugate contractions $K_{in} \in \mathbf{M}_{m(n)} ( \F)$ that exactly commute and satisfy $\lim_{n \to \infty} (H_{in} - K_{in}) = 0$ for each $i$. Furthermore, by normalizing, we may assume that $\sum_{i = 1}^d K_{in}^2 = 1$ holds for each $n$.

Then by Lemma~\ref{classifyingspaces}, there exist $*$-homomorphisms $\psi'_n \colon C(S^{d-1}, \F) \rightarrow \mathbf{M}_{m(n)} (\F)$ that map the $d$ coordinate functions to $K_{in}$ which together form the desired lift of $\psi$.
\end{proof}

The rest of the section is devoted to showing that the answer to Problem~\ref{HalmosProblem2} is ``no'' when $d = 5$, using a $K$-theoretic obstruction. 

\begin{lemma} \label{nonzero}
Given an asymptotic homomorphism $\langle \phi \rangle$ from $A$ to $\K_\F$ such that $\langle \phi \rangle_*$ is nonzero, then there exists a homomorphism $\psi \colon A \rightarrow \prod_{n=1}^\infty \K_\F/ \bigoplus_{n=1}^\infty \K_\F$ such that $\psi_*$ is nonzero.
\end{lemma}

\begin{proof}Given an asymptotic homomorphism $\langle \phi \rangle$ from $A$ to $B = \K_\F$, there is a corresponding homomorphism from $A$ to $B_\infty = C_b([1, \infty))/C_0([1, \infty))$, as discussed in Section~\ref{E-theory}. By evaluating at the positive integers we also obtain a discrete version, that is a map $\psi$ from $A$ to 
$B^{\rm d}_\infty = \prod_{n=1}^{\infty}B / \bigoplus_{n=1}^\infty B$. Note that as in the complex case (see Section~3.2 of \cite{DadarlatEilers02}), we have $K\crt(B^{\rm d}_\infty) \cong \prod_{n=1}^\infty K\crt(B)/ \bigoplus_{n = 1}^\infty K\crt(B)$. But as each evaluation map is homotopic to each other evaluation map, the map 
$$\psi_* \colon K\crt(A) \rightarrow \prod_{n=1}^\infty K\crt(B)/ \bigoplus_{n = 1}^\infty K\crt(B) $$
is given by
$$x \mapsto \pi \circ \Delta \circ \langle \phi \rangle_*(x)$$
where $\Delta$ is the diagonal map $\pi$ is the quotient map.
In particular, if $\langle \phi \rangle_*$ is non-zero, so is $\psi_*$.
\end{proof}

\begin{lemma} \label{fgsubalgebra}
Suppose $B$ is a separable $R^{*}$-algebra, and $A$ is a finitely generated $R^{*}$-subalgebra of
\[
A \subseteq
\left.\prod_{n=1}^{\infty} (B\otimes \K_\R )\right/\bigoplus_{n=1}^{\infty} (B\otimes \K_\R)  \; .
\]
Then there is
a sequence $m(1)<m(2)<$ on natural numbers so that 
\[
A \subseteq
\left.\prod_{n=1}^{\infty}\mathbf{M}_{m(n)}(B)\right/\bigoplus_{n=1}^{\infty}\mathbf{M}_{m(n)}(B).
\]
\end{lemma}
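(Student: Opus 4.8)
The plan is to exploit the fact that $B \otimes \K_\R$ is the inductive limit of the matrix algebras $\mathbf{M}_m(B)$ included as upper-left corners, and to approximate a finite generating set of $A$ entry-by-entry along the sequence index $n$ by elements coming from corners of a controlled size $m(n)$.

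First I would fix notation. Write $\K_\R = \overline{\bigcup_{m=1}^\infty \mathbf{M}_m(\R)}$, with $\mathbf{M}_m(\R)$ sitting inside $\mathbf{M}_{m+1}(\R)$ and inside $\K_\R$ as the upper-left corner; tensoring with $B$ gives isometric $*$-homomorphisms $\iota_m \colon \mathbf{M}_m(B) \hookrightarrow B \otimes \K_\R$ whose images have dense union. Let $\pi$ denote the quotient map from $\prod_{n=1}^\infty (B \otimes \K_\R)$ onto $\prod_{n=1}^\infty (B \otimes \K_\R)/\bigoplus_{n=1}^\infty (B \otimes \K_\R)$. Choose $a^{(1)}, \dots, a^{(k)} \in A$ generating $A$ as an $R^{*}$-algebra and lift each to a bounded sequence, so that $a^{(j)} = \pi\bigl((x^{(j)}_n)_n\bigr)$ with $x^{(j)}_n \in B \otimes \K_\R$.

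Next, for each $n$ I would pick $m(n)$ --- strictly increasing in $n$ --- large enough that, simultaneously for all $j \in \{1, \dots, k\}$, there is $y^{(j)}_n \in \mathbf{M}_{m(n)}(B)$ with $\|\iota_{m(n)}(y^{(j)}_n) - x^{(j)}_n\| < 1/n$; this is possible because there are only finitely many generators and the images of the $\iota_m$ have dense union. The sequences $(y^{(j)}_n)_n$ then lie in $\prod_n \mathbf{M}_{m(n)}(B)$. The maps $\iota_{m(n)}$ assemble into a $*$-homomorphism $\prod_n \mathbf{M}_{m(n)}(B) \to \prod_n (B \otimes \K_\R)$ which, since each $\iota_{m(n)}$ is isometric, pulls $\bigoplus_n (B \otimes \K_\R)$ back to exactly $\bigoplus_n \mathbf{M}_{m(n)}(B)$, hence descends to an isometric (in particular injective) $*$-homomorphism
$$J \colon \left.\prod_{n=1}^\infty \mathbf{M}_{m(n)}(B)\right/\bigoplus_{n=1}^\infty \mathbf{M}_{m(n)}(B) \longrightarrow \left.\prod_{n=1}^\infty (B \otimes \K_\R)\right/\bigoplus_{n=1}^\infty (B \otimes \K_\R).$$

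Finally, since $\|\iota_{m(n)}(y^{(j)}_n) - x^{(j)}_n\| \to 0$, the sequence $\bigl(\iota_{m(n)}(y^{(j)}_n) - x^{(j)}_n\bigr)_n$ lies in $\bigoplus_n (B \otimes \K_\R)$, so $J$ carries the class of $(y^{(j)}_n)_n$ to $a^{(j)}$. Thus the image of $J$ is a closed $*$-subalgebra containing $a^{(1)}, \dots, a^{(k)}$, hence containing $A$; identifying $A$ with its isometric preimage under $J$ yields the asserted inclusion. I do not expect a genuine obstacle here --- the only delicate points are choosing a single corner size $m(n)$ that works for all finitely many generators at scale $1/n$, and the injectivity of $J$, both of which follow immediately from the fact that the corner embeddings $\iota_m$ are isometric.
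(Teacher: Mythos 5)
Your proof is correct and follows essentially the same route as the paper: approximate a lift of each of the finitely many generators coordinatewise by elements of increasing matrix corners with error at most $1/n$, so that the differences lie in the direct sum, and conclude that $A$ sits inside the corner-product quotient. The only cosmetic difference is that the paper realizes the approximants as compressions $p_n b_n p_n$ by an increasing sequence of projections $p_n \in 1 \otimes \K_\R$ rather than invoking density of $\bigcup_m \mathbf{M}_m(B)$, which amounts to the same thing.
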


\begin{proof}
Given a single element $a \in A$, write $a = [(a_{1},a_{2},\dots)]$ where $a_i \in B \otimes \K\sur$.
We can choose an increasing sequence $p_{1},p_{2}, \dots$ of standard projections
in $1\otimes\K_{\R}$ (with $1$ in $\tilde{B}$ if
needed) so that 
\[
\left\Vert p_{n}a_{n}p_{n}-a_{n}\right\Vert \leq\frac{1}{n}
\]
 and so 
$[(a_{1}, a_{2},\dots)] = [(p_{1}a_{1}p_{1},p_{2}a_{2}p_{2},\dots)]$.
More generally, for a finite set of elements in $A$, we can use a single sequence of
projections as above to show that 
$$A \subseteq \left.\ \prod_{n=1}^\infty p_n (B \otimes \K\sur) p_n  \right/  \bigoplus_{n=1}^\infty p_n(B \otimes \K\sur)p_n \; .$$
\end{proof}

We are ready to prove our main theorem.

\begin{thm}
Suppose that $d \in \N$ and $\F \in \{\R, \C,  \Ham\}$ satisfy the hypotheses of Theorem~\ref{t:asynonvanishing}. Then there is a sequence of integers $m(1),m(2),\dots$
and a unital $*$-homomorphism
\[
\phi:C(S^{d},\R)\rightarrow\left. \prod_{n=1}^{\infty}\mathbf{M}_{m(n)}(\F)\right/\bigoplus_{n=1}^{\infty}\mathbf{M}_{m(n)}(\F)
\]
 that cannot be lifted to a unital $*$-homomorphism to
$\prod_{n=1}^{\infty} \mathbf{M}_{m(n)}(\F)$.
\end{thm}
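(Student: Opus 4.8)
The plan is to build the map $\varphi$ directly out of the asymptotic morphism supplied by Theorem~\ref{t:asynonvanishing}, and then to rule out a lift using Lemma~\ref{lem:isZero}. The structural observation that drives everything is that $S^{d}\R \cong C_{0}(\R^{d},\R)$ sits inside $C(S^{d},\R)$ as the ideal of functions vanishing at a basepoint, and in fact $C(S^{d},\R)$ is its unitization $(S^{d}\R)^{\sim}$. Hence, if $\varphi$ had a lift $\Psi \colon C(S^{d},\R) \to \prod_{n}\mathbf{M}_{m(n)}(\F)$ with $\pi \circ \Psi = \varphi$, then the restriction $\Psi|_{S^{d}\R} \colon S^{d}\R \to \prod_{n}\mathbf{M}_{m(n)}(\F)$ would be a $*$-homomorphism, hence homotopic to $0$ by Lemma~\ref{lem:isZero}, hence zero on $K$-theory; and since $\pi \circ (\Psi|_{S^{d}\R}) = \varphi|_{S^{d}\R}$, this would force $(\varphi|_{S^{d}\R})_{*} = 0$. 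So the whole problem reduces to producing a unital $\varphi$ with $(\varphi|_{S^{d}\R})_{*} \neq 0$.

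To construct it, I would first invoke Theorem~\ref{t:asynonvanishing} to obtain an asymptotic morphism $\asy{\phi_{t}} \colon S^{d}\R \to \F \otimes \K\sur$ that is nonzero on $K$-theory (using part (1) when $\F = \R$ and part (2) when $\F = \Ham$, noting that $\F \otimes \K\sur \cong \K_{\F}$ specializes to $\K\sur$, resp.\ $\K\sur \otimes \Ham$, in those two cases). Since $\F \otimes \K\sur$ is separable and has property $\Theta$, Lemma~\ref{asymorphismtoproduct} then provides a $*$-homomorphism
$$\psi \colon S^{d}\R \to \left. \prod_{n=1}^{\infty}(\F \otimes \K\sur) \right/ \bigoplus_{n=1}^{\infty}(\F \otimes \K\sur)$$
with $\psi_{*} = \hat{\Delta}_{*} \circ \asy{\phi_{t}}_{*}$, and this is nonzero by the final assertion of that lemma. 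Because $S^{d}\R \cong C_{0}(\R^{d},\R)$ is a finitely generated $R^{*}$-algebra, its image $\psi(S^{d}\R)$ is a finitely generated $R^{*}$-subalgebra of the above quotient, so Lemma~\ref{fgsubalgebra} gives integers $m(1) < m(2) < \cdots$ with
$$\psi(S^{d}\R) \subseteq \left. \prod_{n=1}^{\infty}\mathbf{M}_{m(n)}(\F) \right/ \bigoplus_{n=1}^{\infty}\mathbf{M}_{m(n)}(\F) \subseteq \left. \prod_{n=1}^{\infty}(\F \otimes \K\sur) \right/ \bigoplus_{n=1}^{\infty}(\F \otimes \K\sur) \; ,$$
the last inclusion being the standard one coming from the proof of that lemma. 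Writing $\psi'$ for $\psi$ with codomain cut down to $\prod_{n}\mathbf{M}_{m(n)}(\F) / \bigoplus_{n}\mathbf{M}_{m(n)}(\F)$, composing $\psi'$ with this last inclusion returns $\psi$, so $\psi'_{*} \neq 0$ as well, by functoriality of $K$-theory.

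Finally, since $\prod_{n}\mathbf{M}_{m(n)}(\F) / \bigoplus_{n}\mathbf{M}_{m(n)}(\F)$ is unital and $C(S^{d},\R) = (S^{d}\R)^{\sim}$, I would let $\varphi$ be the unital extension of $\psi'$, namely $\varphi(a + \lambda 1) = \psi'(a) + \lambda 1$; this is a unital $*$-homomorphism with $\varphi|_{S^{d}\R} = \psi'$, hence $(\varphi|_{S^{d}\R})_{*} \neq 0$. By the obstruction in the first paragraph, $\varphi$ then admits no lift at all, in particular no unital one, which is the assertion. I expect the main difficulty to be bookkeeping rather than substance: pinning down the identification $C(S^{d},\R) = (S^{d}\R)^{\sim}$ and the fact that $S^{d}\R$, hence $\psi(S^{d}\R)$, is finitely generated so that Lemma~\ref{fgsubalgebra} applies verbatim; checking that the embedding it furnishes is compatible with the canonical inclusion into $\prod_{n}(\F \otimes \K\sur) / \bigoplus_{n}(\F \otimes \K\sur)$ so that non-vanishing on $K$-theory persists after compressing into matrix algebras; and carefully tracking the restriction-to-the-ideal step for a hypothetical lift.
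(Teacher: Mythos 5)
Your proposal is correct and follows essentially the same route as the paper: obtain an asymptotic morphism from Theorem~\ref{t:asynonvanishing} that is nonzero on $K$-theory, convert it to a $*$-homomorphism into the product-modulo-sum via Lemma~\ref{asymorphismtoproduct}, compress into matrix blocks with Lemma~\ref{fgsubalgebra}, unitize, and rule out a lift by restricting to the ideal $S^d\R$ and applying Lemma~\ref{lem:isZero}. Your explicit treatment of $C(S^d,\R)$ as the unitization and of the restriction-of-a-lift step simply spells out what the paper compresses into ``extend unitally \dots that similarly cannot be lifted.''
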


\begin{proof}
Let $d$ be as above. By Theorem~\ref{t:asynonvanishing}, there exists an asymptotic morphism 
\[
\left\langle \phi_{t}\right\rangle :S^d \R \rightarrow \K_\F
\]
that induces a non-zero map on $K$-theory.
Then by Lemma~\ref{nonzero}, 
we obtain a $*$-homomorphism of the form
$$
\phi' \colon S^d \R \rightarrow
\left.\prod_{n=1}^{\infty} \K_\F \right/\bigoplus_{n=1}^{\infty} \K_\F \; 
$$
that is non-zero on $K$-theory. By Lemma~\ref{fgsubalgebra}, this $*$-homomorphism factors through a $*$-homomorphism of the form
$$
\phi: S^d \R \rightarrow
\left.\prod_{n=1}^{\infty}\mathbf{M}_{m(n)}(\F)\right/\bigoplus_{n=1}^{\infty}\mathbf{M}_{m(n)}(\F) \; .
$$
that must also be nonzero on united $K$-theory.

Now if $\phi$ could be lifted to a $*$-homomorphism
with values in $\prod_{n=1}^{\infty} \mathbf{M}_{m(n)}(\F)$
then such a lift would have to be non-zero on $K$-theory. However, as any homomorphism from $S^d$ to $\K_F$ vanishes on $K$-theory, no such lift of $\phi$ exists.

Finally, extend unitally to form a $*$-homomorphism
$$
\phi: C(S^d, \R) \rightarrow
\left.\prod_{n=1}^{\infty}\mathbf{M}_{m(n)}(\F)\right/\bigoplus_{n=1}^{\infty}\mathbf{M}_{m(n)}(\F) \; 
$$
that similarly cannot be lifted.
\end{proof}

\begin{corol}
For $d = 5$, $\F = \R$ and for $d = 3$, $\F = \Ham$; the answer to Problem~\ref{HalmosProblem2}, and hence also to Problem~\ref{HalmosProblem}, is ``no''.
\end{corol}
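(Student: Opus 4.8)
The plan is to read off this corollary from the theorem immediately above, together with Theorem~\ref{2Problems}, after fixing the dictionary between the three indices that appear. Applied with sphere dimension $e$, the preceding theorem produces a non-liftable unital $*$-homomorphism out of $C(S^{e},\R)$; via the identification $C(S^{e},\R) = (S^{e}\R)^{+} = C_{0}(\R^{e},\R)^{+}$ this is exactly the data in Problem~\ref{HalmosProblem2} with parameter $d = e+1$, which is in turn the Halmos problem (Problem~\ref{HalmosProblem}) for $d = e+1$ self-adjoint contractions. So for a negative answer with $d = 5$ matrices over $\R$ I would invoke the theorem with $e = 4$, and for $d = 3$ matrices over $\Ham$ I would invoke it with $e = 2$. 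In both cases the hypotheses of Theorem~\ref{t:asynonvanishing} are met: $e = 4 > 0$ with $4 \equiv 4 \pmod 8$ lies in the list $\{0,4,6,7\}$ for $\F = \R$, and $e = 2 > 0$ with $2 \equiv 2 \pmod 8$ lies in the list $\{0,2,3,4\}$ for $\F = \Ham$.

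Then, for each case I would spell out the reduction. The preceding theorem supplies integers $m(n)$ and a unital $*$-homomorphism $\varphi \colon C(S^{e},\R) \to \prod_{n} \mathbf{M}_{m(n)}(\F)/\bigoplus_{n}\mathbf{M}_{m(n)}(\F)$ with no unital $*$-homomorphic lift to $\prod_{n}\mathbf{M}_{m(n)}(\F)$. Restricting $\varphi$ to the ideal $S^{e}\R = C_{0}(\R^{e},\R) \subseteq C(S^{e},\R)$ gives the $*$-homomorphism $\psi$ of Problem~\ref{HalmosProblem2}, and I would check that $\psi$ has no lift at all: any $*$-homomorphic lift $S^{e}\R \to \prod_{n}\mathbf{M}_{m(n)}(\F)$ extends (the codomain being unital) to a unital $*$-homomorphism on the unitization $C(S^{e},\R)$ whose composition with the quotient map is a unital $*$-homomorphism agreeing with $\varphi$ on the codimension-one ideal $S^{e}\R$, hence equal to $\varphi$ --- contradicting the preceding theorem. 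Thus Problem~\ref{HalmosProblem2} has answer ``no'' for $d = e+1$, and by the contrapositive of Theorem~\ref{2Problems} so does Problem~\ref{HalmosProblem}. Specializing $(e,\F)$ to $(4,\R)$ and $(2,\Ham)$ yields the stated conclusion for $(d,\F) = (5,\R)$ and $(3,\Ham)$.

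Since the whole argument is an unwinding of definitions there is no real obstacle; the only genuine care is needed in the index bookkeeping --- the number of almost-commuting matrices, the dimension of the sphere, and the suspension degree in Theorem~\ref{t:asynonvanishing} each differ from the next by one --- and in the unital-versus-nonunital passage between $C(S^{e},\R)$ and $S^{e}\R$, which the unitization argument above handles.
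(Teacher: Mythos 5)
Your proposal is correct and follows essentially the paper's (implicit) argument: the corollary is read off from the preceding theorem applied with sphere dimension $4$ for $\F=\R$ and $2$ for $\F=\Ham$ (both allowed by Theorem~\ref{t:asynonvanishing}), giving a negative answer to Problem~\ref{HalmosProblem2} for $d=5$, respectively $d=3$, and then the contrapositive of Theorem~\ref{2Problems} transfers this to Problem~\ref{HalmosProblem}. Your only added step, the unitization argument deducing non-liftability of $\varphi|_{S^{e}\R}$ from the non-existence of a unital lift of $\varphi$, is valid and merely reverses the order of the paper's own proof of that theorem, which first establishes non-liftability of the nonunital map on $S^{d}\R$ and then extends unitally.
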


The result above for $d = 3$, $\F = \Ham$, replicates results from \cite{LorSorensenTorus}. 

\section{Pictures of K-theory}

It is standard practice to represent $K_0$ classes by projections and $K_1$ classes by
unitaries. Since \cite{HastLorTheoryPractice}, a picture has been developing in which 
all ten of the real and complex $K$-theory groups of an $R^*$-algebra can be represented concretely
in terms of homotopy classes of unitary elements with certain symmetries. We finish this 
paper showing how our methods partially extend this picture to $K_{-1}(A)$ and $K_{3}(A)$, 
allowing us to represent any such element with a specific class of unitaries.
We expect that a complete study of unsuspended $E$-theory in the case of $R^*$-algebras
can be used to complete this picture, including a concrete description of all of the interrelating 
natural transformations and the boundary maps.

The table below summarizes this picture and extends Tables~7 and 8 of \cite{HastLorTheoryPractice}, showing 
the symmetries that are used to represent each $K$-group. Here $A$ is an $R^*$-algebra 
and $\tau$ is the associated anti-automorphism of the complexification $A\suc$. Thus, the first 
line indicates that $KU_0(A)$ is isomorphic to the group of homotopy classes of self-adjoint unitaries 
in $M_{\infty}(A\suc)$ while $KO_0(A)$ is isomorphic to the group of homotopy classes of 
self-adjoint unitaries satisfying $\tau(u) = u^*$ 
(a priori, $u$ is in $A\suc$, but the second condition restricts it to $A$).

\begin{center} 
\begin{tabular}{|c|c|}
\hline 
K-group & unitary classes  \\ \hline \hline
$KU_0(A)$ & $u = u^*$ \\ \hline
$KU_1(A)$ &  -- \\ \hline  \hline
$KO_{-1}(A)$ & $u^\tau = u $ \\ \hline
$KO_0(A)$ &  $u = u^*$, $u^{\tau} = u^* $  \\ \hline
$KO_1(A)$ &  $u^{\tau} = u^* $ \\ \hline 
$KO_2(A)$ &  $u = u^*$, $u^\tau = -u$ \\ \hline 
$KO_{3}(A)$ & $u^{\tau\otimes \sharp} = u $ \\ \hline
$KO_4(A)$ &  $u = u^*$, $u^{\tau \otimes \sharp} = u^*$  \\ \hline
$KO_5(A)$ &  $u^{\tau \otimes \sharp} = u^*$ \\ \hline 
$KO_6(A)$ &  $u = u^*$, $u^{\tau \otimes \sharp} = -u$ \\ \hline \hline
\end{tabular}
\end{center}

In this table, the line with $KO_{-1}(A)$ comes from Theorem~\ref{KO-unitary} below. 
The line for $KO_{3}(A)$ then arises via the isomorphism 
$KO_{n+4}(A) \cong KO_{n}(\Ham \otimes \sur A)$. The
the antiautomorphism of $A\suc \otimes\suc  M_2(\C)$ 
associated to $\Ham \otimes\sur A$ is $\sharp \otimes \tau \otimes$
where
$$\sharp \colon \sm{a}{b}{c}{d} \rightarrow \sm{d}{-b}{-c}{a} \; .$$
All the other lines of the 
table are discussed in \cite{HastLorTheoryPractice}.

Let $A$ be an $R^*$-algebra and let $A\suc$ be the complexification with anti-automorphism $\tau$.
Let $G(A)$ be the group of homotopy classes of unitaries $u \in M_{\infty}(\widetilde{A\suc})$ that 
satisfy $u^\tau = u$. The associated antiautomorphism 
on $M_n(A\suc) \cong  M_n(\C) \otimes A\suc$ 
is $\rm{tr} \otimes \tau$
and we have
$M_{n}(\widetilde{A})$ mapping into $M_{n+1}(\widetilde{A})$ 
by $u \mapsto \sm{u}{0}{0}{1}$. 

\begin{lemma} \label{group}
For an $R^*$-algebra $A$, $G(A)$ is an abelian group.
\end{lemma}

\begin{proof}
Let $R(t) = \sm{\cos (\pi t/2)}{-\sin (\pi t/2)}{\sin (\pi t/2)}{\cos (\pi t/2)}$ 
be the rotation matrix for 
$t \in [0, 1]$.  Since $R^{\rm{tr}} = R^*$, for any unitaries $u_1, u_2 \in A$ satisfying $u_i^\tau = u_i$, 
we have that
$U(t) = R(t) \sm{u_1}{0}{0}{u_2} R(t)^*$ is a path of unitaries satisfying $U(t)^{\tau} = U(t)$ from
$\sm{u_1}{0}{0}{u_2}$ to $\sm{u_2}{0}{0}{u_1}$
showing that $G(A)$ is a commutative semigroup.

To show that there are inverses in $G(A)$, it suffices to show that 
$\sm{z}{0}{0}{\overline{z}}$ is homotopic to $1_2$ in $C(M_2(S^1))$, through homotopies satisfying
$u^\tau = u$, taking the trivial involution on $C(S^1, \C)$. Indeed,
$z \in C(S^1, \C)$ is the universal unitary satisfying $u^\tau = u$.
It is easy to see that $\sm{z}{0}{0}{\overline{z}}$ is homotopic to
$$\begin{cases}
\sm{z^2}{0}{0}{1} & \im{z} \geq 0 \\
\sm{1}{0}{0}{\overline{z}^2} & \im{z} < 0 \; .
\end{cases}
$$
Using a variation of the rotation argument of the first paragraph, this is homotopic to
$$\begin{cases}
\sm{z^2}{0}{0}{1} & \im{z} \geq 0 \\
\sm{\overline{z}^2}{0}{0}{1} & \im{z} < 0 \; .
\end{cases}
$$
which is clearly homotopic to $\sm{1}{0}{0}{1}$.
\end{proof}

\begin{thm} \label{KO-unitary}
There is a natural group homomorphism $\phi_A \colon K_{-1}(A) \rightarrow G(A)$ with a left
inverse.
\end{thm}

This theorem will be proven easily from the machinery already established 
in this paper. 
It follows that any unitary $u$ satisfying $u^\tau = u$ determines
a class in $K_{-1}(A)$; that any class in $K_{-1}(A)$ arises from such a unitary in this way; and
that two distinct $K_{-1}$ classes must arise from distinct
homotopy classes of unitaries. That $\phi_A$ is an isomorphism will be left to further work.

\begin{proof}[Proof of Theorem~\ref{KO-unitary}] 
We use the homomorphism
$$KK(S\R, A) \xrightarrow{\ve'} [[S\R, \K\sur \otimes \sur A]]$$
discussed in Section~5 which was shown there to have a left inverse. As there is a natural
isomorphism $K_{-1}(A) \cong KK(S\R, A)$, it only remains to establish an isomorphism between
$[[S\R, K\sur \otimes \sur A]]$ and $G(A)$.

From Section~3 of \cite{LorSor} we know that $S\R$ is semiprojective, which gives us the first 
isomorphisms in the following chain.
\begin{align*}
[[S\R, K\sur \otimes \sur A]] &\cong [S\R, \K\sur \otimes \sur A]   \\
&\cong [\widetilde{S\R}, \widetilde{\K\sur \otimes \sur A}]^+  \\
& \cong [(\widetilde{S\C}, \id), (\widetilde{\K \otimes A\suc}, \id \otimes \tau)]^+ \\
& \cong G(A)
\end{align*}
The third isomorphism comes from the catagorical equivalence between (unital) $R^*$-algebras 
and (unital) $C^*$-algebras with antiautomorphism.
The fourth isomorphism arises from the fact that
$\widetilde{S\C} = C(S^1, \C)$ is the 
universal $C^*$-algebra generated by
a unitary $u$ satisfying $u^\tau = u$ as in the proof of Lemma~\ref{group}.
\end{proof}

\providecommand{\bysame}{\leavevmode\hbox to3em{\hrulefill}\thinspace}
\providecommand{\MR}{\relax\ifhmode\unskip\space\fi MR }
% \MRhref is called by the amsart/book/proc definition of \MR.
\providecommand{\MRhref}[2]{
  \href{http://www.ams.org/mathscinet-getitem?mr=#1}{#2}
      }
\providecommand{\href}[2]{#2}

\end{document}